\def\supp{\mathop{\rm supp}\nolimits}
\def\lip{\mathop{\rm Lip}\nolimits}
\newtheorem{theorem}{Theorem}[section]
\newtheorem{lemma}[theorem]{Lemma}
\newtheorem{proposition}[theorem]{Proposition}
\newtheorem{corollary}[theorem]{Corollary}
\newtheorem{definition}[theorem]{Definition}
\newtheorem{remark}[theorem]{Remark}
\newtheorem{example}[theorem]{Example}
\newtheorem{notation}[theorem]{Notation}
\renewenvironment{proof}[1][.]{%
	\bigskip\noindent{\bf Proof#1 }}{%
	\hfill$\blacksquare$\bigskip}
\begin{document}
\pagestyle{myheadings}
\title{Optimal transportation and pressure at zero temperature}
\author{Jairo K. Mengue}
\affil{Universidade Federal do Rio Grande do Sul}

\date{\today}
\maketitle

\begin{abstract}
Given two compact metric spaces $X$ and $Y$, a Lipschitz continuous cost function $c$ on $X \times Y$ and two probabilities $\mu \in\mathcal{P}(X),\,\nu\in\mathcal{P}(Y)$, we propose to study the Monge-Kantorovich problem and its duality from  a zero temperature limit of a convex pressure function. We consider the entropy defined by $H(\pi) = -D_{KL}(\pi|\mu\times \nu)$, where $D_{KL}$ is the Kullback-Leibler divergence, and then the pressure defined by the variational principle 
\[P(\beta A) = \sup_{\pi \in \Pi(\mu,\nu)} \left[ \smallint  \beta A\,d\pi + H(\pi)\right],\]where $\beta>0$ and $A=-c$.
We will show that it admits a dual formulation and when $\beta \to+\infty$ we recover the solution for the usual Monge-Kantorovich problem and its Kantorovich duality. Such approach is similar to one which is well known in Thermodynamic Formalism and Ergodic Optimization, where $\beta$ is interpreted as the inverse of the temperature ($\beta = \frac{1}{T}$) and $\beta\to+\infty$ is interpreted as a zero temperature limit.
\end{abstract}

\vspace {.8cm}
  
\noindent 
\emph{Key words and phrases:} \newline Monge-Kantorovich problem, Kantorovich duality, Kullback-Leibler divergence, Entropy, Pressure. 

\noindent
\emph{2020 Mathematics Subject Classification:} \newline Primary:  37A50, 28A33, 28D20, 46E27, 60B10, 60F10, 47H10.

%37A50, Dynamical systems and their relations with probability theory and stochastic processes [See also 60Fxx, 60G10]
%60F10 Large deviations
%28A33, Spaces of measures, convergence of measures [See also 46E27, 60Bxx]
% 28D20 Entropy and other invariants
%60B10 Convergence of probability measures
%46E27, Spaces of measures
%47H10; Fixed-point theorems

\section{Introduction}

We consider  two compact metric spaces $(X,d_X)$, $(Y,d_Y)$ and the product space $X\times Y$ with the metric $d_X+d_Y$. We consider also the Borel sigma algebra over each space and two probabilities, $\mu \in\mathcal{P}(X)$ and $\nu\in \mathcal{P}(Y)$ such that $\supp(\mu)=X$ and $\supp(\nu)=Y$. A probability $\pi \in \mathcal{P}(X\times Y)$ will be called a \textit{transference plan} if it has $x-$marginal $\mu$ and $y-$marginal $\nu$, that is,
\[\smallint  f(x)\,d\pi(x,y) = \smallint  f(x)\,d\mu(x) \,\,\,\text{and}\,\,\,\smallint  g(y)\,d\pi(x,y) = \smallint  g(y)\,d\nu(y)\]
for any $f\in C(X)$ and $g\in C(Y)$. We denote by $\Pi(\mu,\nu)$ the set of transference plans. 
Let $c:X\times Y\to\mathbb{R}$ be a Lipschitz continuous function (a cost function) and consider the problem  of  to find the number
\[  \alpha(c) = \min_{\pi \in \Pi(\mu,\nu)} \smallint  c(x,y)\,d\pi\]
and an optimal transference plan (Monge-Kantorovich problem).

A classical result concerning such variational principle is Kantorovich Duality: Let $\Phi_c:=\{(\varphi,\psi)\in C(X)\times C(Y)\,|\, c(x,y)\geq \varphi(x)+\psi(y) \,\forall x,y\}$, then
\[\min_{\pi \in \Pi(\mu,\nu)} \smallint  c(x,y)\,d\pi = \sup_{ (\varphi,\psi)\in \Phi_c} \smallint  \varphi(x)\,d\mu(x)+\smallint  \psi(y)\,d\nu(y).\]
Such duality can be formulated in a  broader sense (instead compact spaces and Lipschitz functions) and in \cite{Vi2,Vi1} can be found  a general  exposition of the optimal transportation theory. Anyway we remark that for any $p\geq 1$, if $X=Y$ and the cost function is of the form $c(x,y) = [d(x,y)]^p$ then $c$ is Lipschitz continuos.

As the set $\Pi(\mu,\nu)$ is convex and the map $\pi \mapsto \smallint  c(x,y)\,d\pi$ is affine, the Monge-Kantorovich problem can be interpreted as a variational principle in convex analysis. 
In the present work we propose to consider the Kullback-Leibler divergence and relative entropy and then a different variational principle with introduction  of such entropy (which we call the positive temperature approach). Furthermore, we will recover a solution of the Monge-Kantorovich problem and Kantorovich dual problem as a zero temperature limit, similarly as occur in Thermodynamic Formalism and Ergodic Optimization (see \cite{BLL}). We believe that section 2 of \cite{MO3} also helps to clarify our strategy.  
 
If $\eta$ and $\rho$ are probabilities on a same measurable space, the Kullback-Leibler divergence is known as 
\[D_{KL}(\eta|\rho)  :=\left\{\begin{array}{cc}
	\int  \log(\frac{d\eta}{d\rho})\,d\eta & \text{if}\, \eta \ll \rho\\ \\
	+\infty & \begin{array}{c}\text{if}\,\eta\,\text{is not absolutely continuous}\\\text{with respect to}\,\rho \end{array}\end{array}\right. ,  \]
where $\frac{d\eta}{d\rho}$ denotes the Radon-Nikodym derivative.

Following \cite{LM3} we define the entropy of a probability $\eta \in \mathcal{P}(X\times Y)$ relative to $\mu \in \mathcal{P}(X)$ by 
$H^\mu(\eta)=-D_{KL}(\eta\,|\, \mu\times \rho)$, where $\rho$ is the $y-$marginal of $\eta$.  We refer \cite{LM3} for additional results and characterization of $H^\mu$.  In  \cite{LM3,LMMS2, MO,MO3} can be found such entropy in the context of Ruelle Operator and a variational principle of the pressure, which is defined from a dynamical system or an iterated function system (IFS).

Supposing $\pi \in \Pi(\mu,\nu)$ we have  $H(\pi):=H^\mu(\pi)=-D_{KL}(\pi\,|\, \mu\times \nu)$.
In the present work, using such entropy $H(\pi)$,  we introduce the following definition of pressure:
\[P(A) = \sup_{\pi \in \Pi(\mu,\nu)} \left[ \smallint   A\,d\pi + H(\pi)\right],\]
where $A:X\times Y \to \mathbb{R}$ is continuous. Let us consider the problem of to find a transference plan  which attains the supremum for $P$ (an equilibrium).  In \cite{LM, LMMS2, MO} appear some duality results for variational principles in different settings which consider a mixing of transport theory and ergodic theory. In such works it is considered a dynamic or IFS, but the pressure $P(A)$ above defined does not consider any dynamic. Anyway,  in  \cite{BCMV} and its correction \cite{BCMVZ} there is a study of a general concept of pressure (without a dynamic) in convex analysis, being possible to check that it contains $P(A)$ as an example. 

Our goal is to prove the following theorem.

\begin{theorem} \label{main thm} 
Under above setting, writing $A=-c$ and considering $\beta>0$:

1- there is a pair of functions $\varphi_\beta \in C(X)$ and $\psi_\beta \in C(Y)$ such that 
\[\int  e^{\beta A(x,y)+\varphi_\beta(x)+\psi_\beta(y)}d\mu(x) =1\,\forall y\in Y \,\,\, \text{and}\,\,\, \int  e^{A(x,y)+\varphi_\beta(x)+\psi_\beta(y)}d\nu(y) =1\,\forall x\in X.\]
Such functions are unique in the following sense: if $(\tilde{\phi_\beta},\tilde{\psi_\beta})$ is another pair, then there is a constant $d_\beta$ such that $ 	\tilde{\phi_\beta} = \phi_\beta + d_\beta$ and $\tilde{\psi_\beta} = \psi_\beta - d_\beta$. Furthermore, these functions are Lipschitz continuous and satisfies $\lip(\varphi_\beta)\leq \beta \lip(A)$ and  $\lip(\psi_\beta)\leq \beta \lip(A)$. 

2 - The probability $d\pi_\beta := e^{\beta A+\varphi_\beta+\psi_\beta}d\mu \,d\nu$ belongs to $\Pi(\mu,\nu)$. Furthermore,  
\[P(\beta A) = \smallint  \beta A\,d\pi_\beta +H(\pi_\beta) = -\smallint  \varphi_\beta\,d\mu_\beta - \smallint  \psi_\beta\,d\nu_\beta .\]

3 - The family of functions $(\frac{\varphi_\beta}{\beta})_{\beta>0}$ and $(\frac{\psi_\beta}{\beta})_{\beta>0}$ are equicontinuous and we can suppose uniformly bounded. Any uniform limit $(\varphi, \psi)$ of $(\frac{\varphi_\beta}{\beta}, \frac{\psi_\beta}{\beta})$ as $\beta\to+\infty$ is a solution of the Kantorovich dual problem, that is, it belongs to $\Phi_c$, also verifying  $\alpha(c)=\smallint  \varphi(x)\,d\mu(x)+\smallint  \psi(y)\,d\nu(y)$. Any weak* limit $\pi$ of $\pi_\beta$ as $\beta\to+\infty$ is a solution of the Monge-Kantorovich problem, that is, it belongs to $\Pi(\mu,\nu)$ and satisfies $\smallint  c\,d\pi = \alpha(c)$.
	
\end{theorem}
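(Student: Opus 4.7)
The plan has three pieces aligned with the three parts of the statement. For Part 1, the most efficient route to existence is a Sinkhorn-type fixed-point argument: given $\psi \in C(Y)$, set $\varphi(x) = -\log \int e^{\beta A(x,y) + \psi(y)}\,d\nu(y)$, which automatically satisfies the second normalisation equation, and then set $\psi'(y) = -\log \int e^{\beta A(x,y) + \varphi(x)}\,d\mu(x)$. The induced map $\psi \mapsto \psi'$ descends to $C(Y)/\mathbb{R}$ and is a strict contraction there in the Hilbert projective metric: the kernel $e^{\beta A}$ is strictly positive and uniformly bounded above and below, so Birkhoff's contraction estimate applies. Its unique fixed point modulo constants produces $(\varphi_\beta, \psi_\beta)$; the same conclusion could instead be drawn from the convex-analytic pressure duality of \cite{BCMV, BCMVZ}. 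Given existence, the Lipschitz bound $\lip(\psi_\beta) \le \beta\,\lip(A)$ follows from writing $e^{\psi_\beta(y') - \psi_\beta(y)} = \int e^{\beta A(x,y')+\varphi_\beta(x)} d\mu \big/ \int e^{\beta A(x,y)+\varphi_\beta(x)} d\mu$ and bounding the pointwise ratio by $e^{\beta\,\lip(A)\,d_Y(y,y')}$; symmetrically for $\varphi_\beta$. For uniqueness modulo constants, strict concavity of $H$ on $\{\pi \ll \mu\times\nu\} \cap \Pi(\mu,\nu)$ forces the maximiser of the Part 2 variational principle to be unique; two candidate pairs produce two densities $e^{\beta A + \varphi + \psi}$ that must therefore agree on $X \times Y = \supp(\mu \times \nu)$, which immediately yields the constant-shift form.

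Part 2 is a short calculation. Integrating the first normalisation against $g(y)\,d\nu(y)$ shows $\pi_\beta$ has $y$-marginal $\nu$, and symmetrically for the $x$-marginal, so $\pi_\beta \in \Pi(\mu,\nu)$. Substituting the explicit density into $H$ gives
\[
H(\pi_\beta) = -\smallint (\beta A + \varphi_\beta + \psi_\beta)\,d\pi_\beta = -\smallint \beta A\,d\pi_\beta - \smallint \varphi_\beta\,d\mu - \smallint \psi_\beta\,d\nu,
\]
which rearranges to the claimed identity (the $\mu_\beta, \nu_\beta$ in the statement are just the marginals of $\pi_\beta$, i.e.\ $\mu$ and $\nu$). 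To see $\pi_\beta$ attains $P(\beta A)$, for any competitor $\pi = f\,d(\mu \times \nu) \in \Pi(\mu,\nu)$ with $\pi \ll \mu\times\nu$, rewrite the gap as
\[
\smallint \beta A\,d\pi + H(\pi) + \smallint\varphi_\beta d\mu + \smallint\psi_\beta d\nu = \smallint \log\!\left(\frac{e^{\beta A + \varphi_\beta + \psi_\beta}}{f}\right)\,d\pi \le \log \smallint e^{\beta A + \varphi_\beta + \psi_\beta}\,d(\mu \times \nu) = 0
\]
by Jensen's inequality; the case $\pi \not\ll \mu \times \nu$ gives $H(\pi) = -\infty$ and is trivial.

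For Part 3, the Lipschitz bound in Part 1 shows $\varphi_\beta/\beta$ and $\psi_\beta/\beta$ are equi-$\lip(A)$-Lipschitz. Fixing a base point $x_0 \in X$ and using the translation freedom to normalise $\varphi_\beta(x_0) = 0$ gives uniform boundedness on compact $X$, and the first normalisation equation then bounds $\psi_\beta/\beta$ on $Y$. By Arzel\`a--Ascoli a subsequence converges uniformly to some pair $(\varphi, \psi)$. To show $(\varphi,\psi) \in \Phi_c$, i.e.\ $\varphi(x) + \psi(y) \le c(x,y) = -A(x,y)$, argue by contradiction: if $\varphi(x^*) + \psi(y^*) + A(x^*,y^*) > 3\varepsilon > 0$ at some point, then by uniform convergence and continuity of $A$ the same strict inequality with $2\varepsilon$ persists on a neighbourhood $U \times \{y^*\}$ for large $\beta$ along the subsequence, and since $\supp(\mu) = X$ forces $\mu(U) > 0$, the first normalisation equation at $y = y^*$ gives $1 \ge \mu(U)\,e^{\beta\varepsilon} \to \infty$, a contradiction. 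Hence $(\varphi,\psi) \in \Phi_c$, so Kantorovich duality yields $\int \varphi d\mu + \int \psi d\nu \le \alpha(c)$. For the reverse, dividing the Part 2 identity by $\beta$ gives
\[
\smallint (\varphi_\beta/\beta) d\mu + \smallint (\psi_\beta/\beta) d\nu = \smallint c\,d\pi_\beta - H(\pi_\beta)/\beta \ge \smallint c\,d\pi_\beta \ge \alpha(c),
\]
using $H \le 0$ and $\pi_\beta \in \Pi(\mu,\nu)$. Passing to the limit yields $\int \varphi d\mu + \int \psi d\nu \ge \alpha(c)$, hence equality, and simultaneously $\int c\,d\pi_\beta \to \alpha(c)$ along the subsequence; any weak* limit $\pi \in \Pi(\mu,\nu)$ of $\pi_\beta$ therefore satisfies $\int c\,d\pi = \alpha(c)$.

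The step I expect to be most delicate is the existence portion of Part 1. Once the pair $(\varphi_\beta, \psi_\beta)$ is in hand, everything else is a direct consequence of the normalisation equations, strict concavity of the entropy, and the clean Arzel\`a--Ascoli plus blow-up argument as $\beta \to \infty$. Producing the pair itself requires either the Sinkhorn/Hilbert-metric contraction argument or a direct appeal to the convex-analytic pressure framework of \cite{BCMV, BCMVZ}: both routes work, but both involve analytic input beyond the mere bookkeeping of marginals and identities.
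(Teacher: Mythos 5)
Your proposal is correct in substance but departs from the paper's proof at several points, and the differences are worth noting. For existence in Part 1, you run a single Sinkhorn iteration for fixed $\beta$ and invoke Birkhoff's contraction in the Hilbert projective metric (the kernel $e^{\beta A}$ is bounded above and below, so the coefficient $\tanh(\Delta/4)<1$), whereas the paper follows Bousch: it introduces discounted operators $T_\mu^s, T_\nu^s$ for $0<s<1$, which are $s$-contractions in the \emph{sup norm}, obtains fixed points $\varphi^s,\psi^s$, and then passes $s\to 1$ by Arzel\`a--Ascoli after controlling $\max(\psi^s)+s\max(\varphi^s)$. Your route is shorter but imports Birkhoff's theorem and needs a word on completeness of the projective metric on the normalized cone (standard in the Sinkhorn literature, but worth saying); the paper's route stays in sup-norm and is entirely elementary. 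For uniqueness, you use strict concavity of $H=-D_{KL}(\cdot\mid\mu\times\nu)$ to force a unique maximizer and then read off the constant-shift relation from the equality of densities on $\supp(\mu\times\nu)=X\times Y$; this is clean but logically presupposes the variational identity of Part 2, so the exposition order would have to be existence, then Part 2, then uniqueness. The paper instead gives a self-contained Sinkhorn--Knopp comparison argument ($p=\varphi_2-\varphi_1$, $q=\psi_2-\psi_1$, extreme-value points, contradiction). For optimality in Part 2 you use Jensen directly on $\int\log\bigl(e^{\beta A+\varphi_\beta+\psi_\beta}/f\bigr)\,d\pi$, while the paper appeals to the Donsker--Varadhan-type formula $H(\pi)=\inf_{u\in N}[-\int u\,d\pi]$ and notes $\beta A+\varphi_\beta+\psi_\beta\in N$; both are one-liners. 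For Part 3 you replace the paper's Laplace-type Lemma \ref{lmms} with a direct blow-up contradiction to get $(\varphi,\psi)\in\Phi_c$, which is more elementary (though Lemma \ref{lmms} additionally yields the sharper complementary-slackness identities \eqref{eq4}, which the paper later reuses in Corollary \ref{cor:KR} and Proposition \ref{prop:LDP}). One small tightening is needed at the very end: you establish $\int c\,d\pi_\beta\to\alpha(c)$ only along the subsequence where $\varphi_\beta/\beta,\psi_\beta/\beta$ converge, so to conclude for an \emph{arbitrary} weak* limit $\pi$ of $\pi_\beta$ you should, as the paper does explicitly, start from a sequence $\beta_m$ realizing that limit and extract a further subsequence along which $\varphi_{\beta_m}/\beta_m,\psi_{\beta_m}/\beta_m$ also converge, then repeat the argument.
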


For compact metric spaces,  the Kantorovich-Rubinstein Theorem claims (see Thm. 1.14 in \cite{Vi1}) that under the case $X=Y$ and $c(x,y)=d(x,y)$ (distance function) we have
\[\alpha(c) = \sup_{\lip(\varphi)\leq 1} \int \varphi\, d(\mu-\nu).\]
   
As a corollary of above theorem we get a solution on the Kantorovich-Rubinstein dual problem. 

\begin{corollary}\label{cor:KR}
Considering the hypotheses and notations of Theorem \ref{main thm} and supposing $X=Y$ and $c(x,y)=d(x,y)$, any uniform limit $\varphi$ of $\frac{\varphi_\beta}{\beta}$ as $\beta\to+\infty$ is a solution of the Kantorovich-Rubinstein dual problem, that is, $\lip(\varphi)\leq 1$ and 
$\alpha(c) = \smallint \varphi \,d(\mu-\nu)$.
\end{corollary}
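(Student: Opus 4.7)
The plan is to combine the three outputs of Theorem \ref{main thm} with the Kantorovich–Rubinstein formula stated just before the corollary. Since the main theorem already hands us a feasible pair $(\varphi,\psi)\in\Phi_c$ achieving the Kantorovich dual value, the work reduces to converting the two–function dual into the one–function Kantorovich–Rubinstein dual under the special structure $X=Y$ and $c=d$.

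First I would check the Lipschitz bound on the limit. On $X\times Y=X\times X$ equipped with the metric $d_X+d_Y=d+d$, the triangle inequality gives $|d(x_1,y_1)-d(x_2,y_2)|\le d(x_1,x_2)+d(y_1,y_2)$, so $\lip(A)=\lip(-d)\le 1$. Part~1 of Theorem~\ref{main thm} then yields $\lip(\varphi_\beta/\beta)\le 1$ for every $\beta>0$. Since the Lipschitz seminorm is lower semicontinuous under uniform convergence, any uniform limit $\varphi$ satisfies $\lip(\varphi)\le 1$.

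Next I would extract the duality equality. By part~3 of Theorem~\ref{main thm}, the limit pair $(\varphi,\psi)\in\Phi_c$ satisfies $\varphi(x)+\psi(y)\le d(x,y)$ for all $x,y\in X$ and $\alpha(c)=\int\varphi\,d\mu+\int\psi\,d\nu$. Setting $y=x$ in the inequality forces $\psi(x)\le -\varphi(x)$ for every $x$, so integrating against $\nu$ gives
\[
\alpha(c)=\int\varphi\,d\mu+\int\psi\,d\nu\ \le\ \int\varphi\,d\mu-\int\varphi\,d\nu=\int\varphi\,d(\mu-\nu).
\]
For the reverse inequality I would invoke the Kantorovich–Rubinstein theorem quoted in the excerpt: because $\lip(\varphi)\le 1$,
\[
\int\varphi\,d(\mu-\nu)\ \le\ \sup_{\lip(\tilde\varphi)\le 1}\int\tilde\varphi\,d(\mu-\nu)=\alpha(c).
\]
Combining the two inequalities yields $\alpha(c)=\int\varphi\,d(\mu-\nu)$, completing the proof.

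There is no real obstacle: the Corollary is essentially a bookkeeping consequence of Theorem~\ref{main thm}. The only point worth stating carefully is that the Lipschitz constant of $d$ on $X\times X$ with respect to the sum metric $d_X+d_Y$ is $1$ and that $\lip(\cdot)$ is preserved under uniform limits; everything else follows from the diagonal trick $\psi(x)\le -\varphi(x)$ and a single application of Kantorovich–Rubinstein.
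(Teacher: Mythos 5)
Your proof is correct, but it takes a different route from the paper at the final step. The paper first shows that the limit satisfies $\psi=-\varphi$ exactly: from item 3 it has the $c$-conjugacy relation $\varphi(x)=\inf_y[d(x,y)-\psi(y)]$, and since $\lip(\psi)\le 1$ the infimum is attained at $y=x$, giving $\varphi=-\psi$. It then substitutes into the identity $\alpha(c)=\smallint\varphi\,d\mu+\smallint\psi\,d\nu$ already furnished by Theorem~\ref{main thm} and is done---no external use of the Kantorovich--Rubinstein theorem is needed. You, by contrast, only extract the one-sided inequality $\psi\le-\varphi$ from the diagonal $y=x$, obtain $\alpha(c)\le\smallint\varphi\,d(\mu-\nu)$, and then close the gap by invoking the Kantorovich--Rubinstein duality formula to supply the reverse inequality $\smallint\varphi\,d(\mu-\nu)\le\alpha(c)$. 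Both arguments are valid, but yours treats the Kantorovich--Rubinstein formula as a black box, whereas the paper's argument is self-contained (and in fact gives an independent derivation of the formula via the zero-temperature limit), and it delivers the slightly stronger structural conclusion $\psi=-\varphi$. Note also that the inequality you import from Kantorovich--Rubinstein is only the elementary direction---for any $\pi\in\Pi(\mu,\nu)$ and $\lip(\tilde\varphi)\le 1$ one has $\smallint\tilde\varphi\,d(\mu-\nu)=\smallint[\tilde\varphi(x)-\tilde\varphi(y)]\,d\pi\le\smallint d\,d\pi$---so you could make your proof self-contained with one extra line, but as written it leans on the cited theorem where the paper does not.
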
 

Concerning the velocity of convergence of $\pi_\beta$ to the optimal transference plan $\pi$, we study if it satisfies a Large Deviation Principle (see \cite{DZ}).

\begin{proposition}\label{prop:LDP} Under the setting of Theorem \ref{main thm}, supposing there exist the uniform limits $\varphi=\lim_{\beta\to+\infty}\frac{\varphi_{\beta}}{\beta}$, $ \psi=\lim_{\beta\to+\infty}\frac{\psi_{\beta}}{\beta}$ and the weak* limit $\pi=\lim_{\beta\to+\infty}\pi_{\beta}$ we have that $(\pi_{\beta})$ satisfies a large deviation principle with rate function $I(x,y) = c(x,y)-\varphi(x)-\psi(y)$. 
\end{proposition}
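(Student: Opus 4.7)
The plan is to verify directly the large deviation upper and lower bounds, exploiting the explicit density $\frac{d\pi_\beta}{d(\mu\times\nu)}=e^{\beta A+\varphi_\beta+\psi_\beta}$. Writing $f_\beta(x,y):=A(x,y)+\frac{\varphi_\beta(x)}{\beta}+\frac{\psi_\beta(y)}{\beta}$, the hypothesis yields $f_\beta\to A+\varphi+\psi=-I$ uniformly on $X\times Y$. Since Theorem \ref{main thm} gives $(\varphi,\psi)\in\Phi_c$, one has $I\geq 0$; continuity of $I$ on the compact space $X\times Y$ makes it automatically a good rate function (lower semicontinuous with compact sublevel sets).

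For the upper bound I would fix a closed set $F$ and use the trivial sup estimate
\[\pi_\beta(F)=\int_F e^{\beta f_\beta}\,d(\mu\times\nu)\leq \exp\!\Bigl(\beta\sup_{(x,y)\in F}f_\beta(x,y)\Bigr).\]
Uniform convergence of $f_\beta$ to $-I$ on the compact set $F$ permits passing to the limit:
\[\limsup_{\beta\to\infty}\frac{1}{\beta}\log\pi_\beta(F)\leq \sup_{F}(-I)= -\inf_{F} I.\]

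For the lower bound, fix an open set $G$ and an arbitrary $(x_0,y_0)\in G$, and choose $r>0$ with $B:=B_r(x_0,y_0)\subset G$. Part 1 of Theorem \ref{main thm} gives $\lip(\varphi_\beta/\beta)\leq \lip(A)$ and $\lip(\psi_\beta/\beta)\leq \lip(A)$, so together with the Lipschitz regularity of $A$ the functions $f_\beta$ are Lipschitz with a constant $C$ independent of $\beta$. Therefore $f_\beta\geq f_\beta(x_0,y_0)-Cr$ on $B$, whence
\[\pi_\beta(G)\geq (\mu\times\nu)(B)\,\exp\!\Bigl(\beta\bigl(f_\beta(x_0,y_0)-Cr\bigr)\Bigr).\]
Since $\supp(\mu\times\nu)=X\times Y$ we have $(\mu\times\nu)(B)>0$, and using $f_\beta(x_0,y_0)\to-I(x_0,y_0)$ I obtain
\[\liminf_{\beta\to\infty}\frac{1}{\beta}\log\pi_\beta(G)\geq -I(x_0,y_0)-Cr.\]
Letting $r\downarrow 0$ and then taking the supremum over $(x_0,y_0)\in G$ gives the desired bound $-\inf_G I$.

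The only real subtlety is the lower bound, where one needs both the uniform Lipschitz estimate on $\varphi_\beta/\beta$ and $\psi_\beta/\beta$ and the fact that $\mu\times\nu$ charges every open ball; both are already provided by the setup of Theorem \ref{main thm} together with the full-support hypothesis on $\mu$ and $\nu$, so I do not anticipate a genuine obstacle.
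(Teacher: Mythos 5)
Your proof is correct, but it takes a genuinely different route from the paper's. The paper invokes Bryc's inverse Varadhan lemma (Theorem 4.4.2 in \cite{DZ}, recalled as Lemma \ref{lemma: Bric}): it computes, via the Laplace-type Lemma \ref{lmms}, that $\Gamma(f)=\lim_\beta\frac{1}{\beta}\log\int e^{\beta f}\,d\pi_\beta=\sup_{x,y}[f(x,y)+A(x,y)+\varphi(x)+\psi(y)]$ for every continuous $f$, and then reads off $I=c-\varphi-\psi$ from the Legendre-type identity $\Gamma(f)=\sup_{x,y}[f(x,y)-I(x,y)]$. You instead verify the large deviation upper and lower bounds directly from the explicit density $e^{\beta f_\beta}$ with $f_\beta=A+\varphi_\beta/\beta+\psi_\beta/\beta$: the upper bound is a one-line sup estimate plus uniform convergence $f_\beta\to -I$, and the lower bound uses the $\beta$-uniform Lipschitz estimate on $f_\beta$ (from part 1 of Theorem \ref{main thm}) together with the full-support hypothesis to guarantee $(\mu\times\nu)(B)>0$ for every ball. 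Your argument is more elementary — it bypasses Bryc's theorem and does not even need the two-sided Laplace asymptotics of Lemma \ref{lmms} — and it makes the role of the Lipschitz bounds and the full-support assumption completely explicit, at the cost of a bit more bookkeeping in the lower bound. Both proofs leave the hypothesis that $\pi_\beta$ converges weakly unused; it plays no role in either argument.
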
 	

Naturally we can ask if  known results of Thermodynamic Formalism and Ergodic Optimization could be translated to Transport Theory. Let us denote, for $A=-c$,
\begin{align*} m(A) &:= \sup_{\pi \in \Pi(\mu,\nu)} \smallint  A\, d\pi = - \alpha(c);\\
\mathcal{M}_{max}(A) &:=\{\pi \in \Pi(\mu,\nu)\,|\,\smallint  A\,d\pi =m(A)\};\\
H_{\max} &:= \sup_{\pi \in \mathcal{M}_{max}(A)} H(\pi) \end{align*}
(we can have $H_{\max}=-\infty$).  $\mathcal{M}_{max}(A)$ is the set of optimal plans to $c$ and $H_{\max}$ is the biggest of the entropies of optimal plans.  
Next proposition is similar to a well know result in Thermodynamic Formalism (see \cite{CG}). It presents a characterization of the possible limits of $\pi_\beta$ using the entropy $H$.

\begin{proposition}\label{pressure} 
Under above setting, with $A=-c$, the function $$\beta \mapsto [P(\beta A )-\beta m(A)],\,\,\beta>0$$ is non-increasing and $\lim_{\beta \to+\infty}P(\beta A )-\beta m(A)= H_{\max}$. Furthermore, any accumulation point for $\pi_{\beta}$ as $\beta\to+\infty$ has a maximal entropy $H$ over optimal plans, that is, if $\pi_\infty$ is an accumulation point of $\pi_{\beta}$ as $\beta\to+\infty$, then $\pi_\infty \in \mathcal{M}_{max}(A)$ and $H_{\max}= H(\pi_\infty)$ (which can be $-\infty$).  
\end{proposition}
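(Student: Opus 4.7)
The plan is to split the proposition into three pieces: monotonicity, a lower bound for the limit, and an upper bound that simultaneously identifies accumulation points, the last step relying on Part 3 of Theorem \ref{main thm} and on upper semicontinuity of the entropy $H$.

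For monotonicity I would rewrite
\[P(\beta A) - \beta m(A) = \sup_{\pi \in \Pi(\mu,\nu)} \Bigl[\beta\bigl(\smallint A\,d\pi - m(A)\bigr) + H(\pi)\Bigr].\]
Since $\int A\,d\pi \le m(A)$ for every $\pi \in \Pi(\mu,\nu)$, the coefficient of $\beta$ inside the bracket is non-positive, so the bracket is non-increasing in $\beta$ for each fixed $\pi$, and the sup of non-increasing functions is non-increasing; hence the limit $L := \lim_{\beta\to\infty}[P(\beta A)-\beta m(A)]$ exists in $[-\infty,0]$. For the lower bound $L \ge H_{\max}$: for any $\pi \in \mathcal{M}_{max}(A)$, plugging $\pi$ into the variational principle for $P(\beta A)$ gives $P(\beta A) \ge \beta m(A) + H(\pi)$, so $P(\beta A) - \beta m(A) \ge H(\pi)$; taking the supremum over $\mathcal{M}_{max}(A)$ yields $P(\beta A) - \beta m(A) \ge H_{\max}$ for all $\beta > 0$, hence $L \ge H_{\max}$.

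For the upper bound I would use the equilibrium $\pi_\beta$ produced by Parts 1--2 of Theorem \ref{main thm}: since $P(\beta A) = \int \beta A\,d\pi_\beta + H(\pi_\beta)$, we have
\[P(\beta A) - \beta m(A) = \beta\bigl(\smallint A\,d\pi_\beta - m(A)\bigr) + H(\pi_\beta) \le H(\pi_\beta).\]
By Part 3 of Theorem \ref{main thm}, along any subsequence $\beta_n \to \infty$ with $\pi_{\beta_n} \to \pi_\infty$ weak$^*$, the limit $\pi_\infty$ lies in $\mathcal{M}_{max}(A)$. The key ingredient is then the weak$^*$ upper semicontinuity of $\pi \mapsto H(\pi) = -D_{KL}(\pi\,|\,\mu\times\nu)$, equivalently the lower semicontinuity of the Kullback--Leibler divergence (a classical fact, obtainable e.g.\ from the variational representation $D_{KL}(\pi\,|\,\mu\times\nu) = \sup_{f\in C(X\times Y)}[\int f\,d\pi - \log\int e^{f}\,d(\mu\times\nu)]$, which exhibits $D_{KL}$ as a supremum of weak$^*$ continuous affine functionals). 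This yields $\limsup_n H(\pi_{\beta_n}) \le H(\pi_\infty) \le H_{\max}$, and combining with the displayed inequality,
\[L \le \limsup_n H(\pi_{\beta_n}) \le H(\pi_\infty) \le H_{\max}.\]
Together with the lower bound $L \ge H_{\max}$, all of these collapse to equalities, giving simultaneously $L = H_{\max}$ and $H(\pi_\infty) = H_{\max}$.

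The only potentially delicate point is handling the case $H_{\max} = -\infty$, but the argument still applies: Part 3 of Theorem \ref{main thm} still forces $\pi_\infty \in \mathcal{M}_{max}(A)$, hence $H(\pi_\infty) \le H_{\max} = -\infty$, so $H(\pi_\infty) = -\infty$ and the upper bound gives $L = -\infty = H_{\max}$. Thus the main obstacle is conceptual rather than technical: one must invoke Theorem \ref{main thm}(3) to know that accumulation points of $(\pi_\beta)$ are optimal, and the weak$^*$ upper semicontinuity of $H$ to transfer the lower bound on $H(\pi_{\beta_n})$ to a lower bound on $H(\pi_\infty)$; everything else is a direct manipulation of the variational principle.
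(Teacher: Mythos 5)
Your proof is correct and its overall structure (monotonicity, lower bound by plugging optimal plans, upper bound via the equilibria $\pi_\beta$ plus upper semicontinuity of $H$) mirrors the paper's, but your monotonicity argument is genuinely cleaner. The paper establishes monotonicity by comparing $P(\beta A)$ against the equilibrium $\pi_{\beta+\epsilon}$ from Theorem \ref{main thm}: it writes $P(\beta A)\geq \beta\smallint A\,d\pi_{\beta+\epsilon}+H(\pi_{\beta+\epsilon}) = P((\beta+\epsilon)A)-\epsilon\smallint A\,d\pi_{\beta+\epsilon}$ and then subtracts $(\beta+\epsilon)m(A)$. Your observation that $P(\beta A)-\beta m(A)=\sup_\pi\bigl[\beta(\smallint A\,d\pi - m(A))+H(\pi)\bigr]$ is a pointwise supremum of functions non-increasing in $\beta$ (since $\smallint A\,d\pi\leq m(A)$) gives the same conclusion without ever invoking attainment of the supremum, so it is both shorter and independent of Theorem \ref{main thm}. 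The paper does eventually use the equilibrium $\pi_\beta$, so nothing is gained globally, but your version makes the monotonicity a purely elementary convexity fact. One small point: where you reprove upper semicontinuity of $H$ via the Donsker--Varadhan formula for $D_{KL}$, the paper already has this as Proposition \ref{prop:entropy}(3), proved from the characterization \eqref{eq7}; you could simply cite it. Also worth noting explicitly is that your chain $L\leq\limsup_n H(\pi_{\beta_n})\leq H(\pi_\infty)\leq H_{\max}\leq L$ uses that the monotone limit $L$ agrees with the limit along any subsequence, which you implicitly rely on — this is fine, but stating it removes any ambiguity.
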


\section{Basic properties of entropy and pressure}

We remember that  the entropy of a probability $\eta \in \mathcal{P}(X\times Y)$ relative to $\mu \in \mathcal{P}(X)$ was defined as 
$H^\mu(\eta)=-D_{KL}(\eta\,|\, \mu\times \rho)$, where $\rho$ is the $y-$marginal of $\pi$. From Theorems 4.4 and 5.1 of \cite{LM3} we get
\[H^\mu(\eta) = -\sup\{\smallint  u(x,y)d\eta\,|\, \smallint  e^{u(x,y)}d\mu(x)=1\,\forall y,\,\, u \,\text{Lipschitz}\}.\]
If $\pi \in \Pi(\mu,\nu)$ then $\nu$ is the $y-$marginal of $\pi$ and we get 
\begin{equation}\label{eq7}
	 H(\pi) := -D_{KL}(\pi\,|\, \mu\times \nu)=  -\sup\{\smallint  u\,d\pi\,|\, \smallint  e^{u(x,y)}d\mu(x)=1\,\forall y,\,\, u \,\text{Lipschitz}\}.
\end{equation}
 \normalsize

\begin{proposition}\label{prop:entropy} Properties of the entropy $H$ on $\Pi(\mu,\nu)$: \newline
1. $H \leq 0$ and $H(\mu\times \nu)=0$;\newline
2. $H$ is a concave function;\newline 
3. $H$ is upper semi-continuous, that is, if $\pi_n$ converges to $\pi$ in the weak* topology, then $\limsup_n H(\pi_n)\leq H(\pi).$
\end{proposition}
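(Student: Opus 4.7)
The plan is to derive all three assertions as direct consequences of the dual representation (\ref{eq7}) of $H$.

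For part 1, I would first plug in the admissible test function $u\equiv 0$ (which is trivially Lipschitz and satisfies $\int e^{0}\,d\mu=1$ for every $y$) into (\ref{eq7}): the supremum on the right is at least $\int 0\,d\pi = 0$, which yields $H(\pi)\leq 0$ for every $\pi\in\Pi(\mu,\nu)$. To obtain the reverse inequality at $\pi=\mu\times\nu$, I would apply Jensen's inequality to the concave function $\log$ pointwise in $y$: for any admissible $u$,
\[
\int u(x,y)\,d\mu(x)\leq \log\int e^{u(x,y)}\,d\mu(x)=0.
\]
Integrating against $\nu$ gives $\int u\,d(\mu\times\nu)\leq 0$, and taking the supremum over admissible $u$ produces $-H(\mu\times\nu)\leq 0$. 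Combined with $H\leq 0$ this gives $H(\mu\times\nu)=0$.

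For part 2, concavity is immediate from (\ref{eq7}): for each fixed admissible $u$, the map $\pi\mapsto \int u\,d\pi$ is affine in $\pi$, so $-H$ is a pointwise supremum of affine functionals and hence convex, meaning $H$ is concave on $\Pi(\mu,\nu)$.

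For part 3, upper semi-continuity follows from the same structural observation. Every admissible $u$ is Lipschitz on the compact space $X\times Y$, hence bounded and continuous. Therefore $\pi\mapsto \int u\,d\pi$ is weak* continuous on $\mathcal{P}(X\times Y)$, and $-H$, being the pointwise supremum of a family of weak* continuous functions, is weak* lower semi-continuous. Equivalently, $H$ is upper semi-continuous.

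There is no serious obstacle: once the dual formula (\ref{eq7}) is in hand, each of the three properties collapses to a one-line argument. The only point requiring a moment's care is verifying that $u\equiv 0$ qualifies as a Lipschitz admissible test function, which is immediate. If instead one tried to argue directly from $H=-D_{KL}(\,\cdot\,|\mu\times\nu)$ without using (\ref{eq7}), the nontrivial step would be the weak* lower semi-continuity of relative entropy, but this is precisely what the variational representation encodes, so invoking (\ref{eq7}) is the cleanest route.
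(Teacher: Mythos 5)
Your proof is correct and rests on the same foundation as the paper's, namely the dual representation (\ref{eq7}) of $H$ as minus a supremum of linear functionals over the test class $N$. The noticeable departures are stylistic and, in one place, substantive. For part 1, the paper establishes $H(\mu\times\nu)=0$ by invoking the Kullback--Leibler definition directly ($-D_{KL}(\mu\times\nu\,|\,\mu\times\nu)=0$), whereas you stay entirely inside the variational representation and use Jensen's inequality pointwise in $y$ to show $\int u\,d(\mu\times\nu)\le 0$ for every admissible $u$; this is a self-contained and clean alternative that avoids re-opening the Radon--Nikodym definition. For parts 2 and 3, the paper carries out the explicit $\epsilon$- and infimum-splitting computations, while you simply invoke the general facts that a pointwise supremum of affine maps is convex and a pointwise supremum of weak* continuous maps is weak* lower semi-continuous; the underlying mechanism is identical, but your phrasing makes the structural reason transparent and is arguably preferable. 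One small point worth stating explicitly in part 3 is that admissible $u$ are Lipschitz on the compact $X\times Y$, hence bounded and continuous, so $\pi\mapsto\int u\,d\pi$ is indeed weak* continuous; you do note this, so the argument is complete.
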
 
\begin{proof} Let us denote by $N=\{u:X\times Y \to\mathbb{R}\,|\, \smallint  e^{u(x,y)}d\mu(x)=1\,\forall y,\,\, u \,\text{Lipschitz}\}$. We have, $H(\pi) = \inf_{u\in N} [-\smallint  u\,d\pi]$.
	
1. If we consider $u=0$ we get $u\in N$ and then $H(\pi)\leq 0$. On the other hand, $H(\mu\times \nu) = -D_{KL}(\mu\times \nu|\mu\times \nu)=0$.

2. For $\pi_1,\pi_2 \in \Pi(\mu,\nu)$ and $\lambda \in [0,1]$, if $\pi=\lambda\pi_1+(1-\lambda)\pi_2$ we have
\begin{align*}
	H(\pi) &= \inf_{u\in N} [-\lambda\smallint  u d\pi_1  -(1-\lambda)\smallint  u d\pi_2 ]\\
	       &\geq  \inf_{u_1\in N} [-\lambda\smallint  u_1 d\pi_1]  +\inf_{u_2\in N}[-(1-\lambda)\smallint  u_2 d\pi_2]\\
	       &=\lambda H(\pi_1) +(1-\lambda)H(\pi_2).
\end{align*}  

3. It suppose that $\pi_n \to \pi$ in the weak* topology. \newline
- If $H(\pi)=-\infty$ then for each $k<0$ there is $u\in N$ such that $-\smallint  u\,d\pi \leq k$. As $\pi_n\to\pi$, for sufficiently large $n$ we get $ -\smallint  u\,d\pi_n \leq k+1$, that is, $H(\pi_n)\leq k+1$. Consequently $\limsup_n H(\pi_n) \leq k+1$. As $k$ is arbitrary, $\limsup_n H(\pi_n)=-\infty$.\newline
- If $H(\pi)>-\infty$, for each $\epsilon>0$ there is $u\in N$ such that $-\smallint  u\,d\pi \leq H(\pi)+\epsilon.$  For sufficiently large $n$ we get $ -\smallint  u\,d\pi_n \leq -\smallint  u\,d\pi+\epsilon$, that is, $H(\pi_n) \leq H(\pi)+2\epsilon$. As $\epsilon$ is arbitrary we get $\limsup_n H(\pi_n)\leq H(\pi)$.
\end{proof} 

\begin{example}\label{example:entropy} The entropy $H$ is not affine (see also Example 2.4 in \cite{MO}).\newline
	
Suppose $X=\{1,2\}$, $Y=\{1,2\}$ $\mu=(1/2,1/2)$, $\nu=(1/2,1/2)$ and consider the following probabilities in $\{1,2\}\times\{1,2\}$:
\[\pi_1 = \begin{pmatrix} 1/2&0\\0&1/2\end{pmatrix},\,\, \pi_2 = \begin{pmatrix} 0&1/2\\1/2&0\end{pmatrix},\,\,\pi=	\begin{pmatrix} 1/4&1/4\\1/4&1/4\end{pmatrix}.\]
We have that $\pi,\pi_1,\pi_2 \in \Pi(\mu,\nu)$, $\pi = \frac{1}{2}\pi_1 +\frac{1}{2}\pi_2$, but $H(\pi)=0$ while $H(\pi_1)=H(\pi_2)=-\log(2)$ (consequently $H(\pi) \neq \frac{1}{2}H(\pi_1) +\frac{1}{2}H(\pi_2)$).
Indeed, $\pi = \mu\times\nu$ and then $H(\pi) = 0$. On the other hand, as $X\times Y$ is a finite set, $\pi_1\ll \mu\times \nu$ and its Radon-Nikodym derivative satisfies (a.e.)
$$\frac{d(\pi_1)}{d(\mu\times \nu)} = \begin{pmatrix} 2&0\\0&2\end{pmatrix}=:J_1.$$
We can conclude this, observing that, for any $f:\{1,2\}\times\{1,2\} \to\mathbb{R}$, we have
\[  \smallint  f\,d\pi_1 = f(1,1)\cdot \frac{1}{2}+f(2,2)\cdot\frac{1}{2} =  \sum_{i,j} f(i,j)\cdot J_1(i,j)\cdot\frac{1}{4}= \smallint  f\cdot J_1\, d(\mu\times\nu).\]
Consequently $H(\pi_1) = -\smallint  \log(\frac{d(\pi_1)}{d(\mu\times \nu)})d\pi_1 = -\log(2).$
Similarly  $H(\pi_2) = -\log(2)$.
\end{example}
	
\begin{notation} If $Z$ is a compact metric space, let us denote  the supremum norm on $C(Z)$ by $|f|_{\infty}:=\sup_{z\in Z}|f(z)|$.
	\end{notation}

Next proposition particularly shows that the pressure in the present work is contained in the definition of pressure in \cite{BCMV} and its correction \cite{BCMVZ}.
 	
\begin{proposition} Properties of the pressure \newline
	For any $A,B \in C(X\times Y)$, $c\in \mathbb{R}$ and $\lambda \in[0,1]$ we have:\newline
1. Image: $P(A) \in [\min(A),\max(A)]$;\newline 
2. monotonicity: $A\leq B \Rightarrow P(A) \leq P(B)$;\newline
3. translation invariance: $P(A+c)=P(A)+c$;\newline
4. convexity: $P(\lambda A+(1-\lambda)B)\leq \lambda P(A)+(1-\lambda)P(B)$;\newline
5. continuity: $|P(A)-P(B)|\leq |A-B|_{\infty}$.
\end{proposition}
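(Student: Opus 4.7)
The plan is to handle all five properties using only the basic definition $P(A) = \sup_{\pi \in \Pi(\mu,\nu)}[\int A\,d\pi + H(\pi)]$ together with properties already established, namely $H \leq 0$ on $\Pi(\mu,\nu)$ and $H(\mu\times\nu)=0$ from Proposition 2.1. All five are essentially routine, so I would organize the argument so that the later items reuse the earlier ones.

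For item 1 (image), I would bound above by noting that for every $\pi \in \Pi(\mu,\nu)$, $\int A\,d\pi \leq \max(A)$ and $H(\pi)\leq 0$, so the supremum is at most $\max(A)$. For the lower bound I would simply plug in the distinguished plan $\pi = \mu\times\nu$, which lies in $\Pi(\mu,\nu)$ and satisfies $H(\mu\times\nu)=0$, yielding $P(A)\geq \int A\,d(\mu\times\nu)\geq \min(A)$. For item 2 (monotonicity), I would observe that $A\leq B$ implies $\int A\,d\pi + H(\pi)\leq \int B\,d\pi + H(\pi)$ pointwise in $\pi$, so the inequality passes to the supremum.

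For item 3 (translation invariance), since $\pi$ is a probability, $\int(A+c)\,d\pi = \int A\,d\pi + c$, and $c$ factors out of the supremum. For item 4 (convexity), the functional $\pi\mapsto \int A\,d\pi$ is affine and $H$ is concave (Proposition 2.1(2)), but I don't actually need concavity of $H$ here; the cleaner route is to write
\[\smallint(\lambda A+(1-\lambda)B)\,d\pi + H(\pi) = \lambda\bigl[\smallint A\,d\pi + H(\pi)\bigr] + (1-\lambda)\bigl[\smallint B\,d\pi + H(\pi)\bigr]\]
and use subadditivity of the supremum to split into $\lambda P(A)+(1-\lambda)P(B)$.

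For item 5 (continuity), I would deduce it from items 2 and 3: from $A \leq B + |A-B|_\infty$ and monotonicity we get $P(A)\leq P(B+|A-B|_\infty)$, and translation invariance converts this to $P(A)\leq P(B)+|A-B|_\infty$; the reverse inequality follows by exchanging $A$ and $B$. The only step that requires any thought is making sure $\pi=\mu\times\nu$ is indeed a transference plan (which is immediate from the product structure), so there is no real obstacle here — the main point is to present the five items in the right order so that items 4 and 5 inherit from items 2 and 3 rather than being proved from scratch.
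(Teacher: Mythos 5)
Your proposal is correct and matches the paper's proof essentially step for step: the same choice of $\pi=\mu\times\nu$ for the lower bound in item 1, the same split $H(\pi)=\lambda H(\pi)+(1-\lambda)H(\pi)$ followed by subadditivity of the supremum in item 4. The only cosmetic difference is item 5, where the paper bounds $P(A)-P(B)$ directly by substituting $\smallint A\,d\pi_1\leq \smallint B\,d\pi_1+|A-B|_\infty$ inside the supremum, whereas you deduce it from items 2 and 3 via $A\leq B+|A-B|_\infty$; these are the same computation packaged differently.
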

\begin{proof}
1.	Considering $\pi=\mu\times \nu$ we get 
	\[P(A) \geq \iint A\,d\mu d\nu +H(\mu\times \nu) = \iint A\,d\mu d\nu \geq \min(A).\]
	on the other hand, as $H\leq 0$, for any $\pi\in \Pi(\mu,\nu)$ we have
	\[\smallint  A\,d\pi +H(\pi) \leq \smallint  A\,d\pi \leq \max(A)\] and consequently
	$P( A) \leq \max(A) .$
	
2. and 3.: it is an immediate consequence of  the definition.

4. 
\begin{align*}P(\lambda A+(1-\lambda)B) &= \sup_{\pi} [\lambda \smallint  A\,d\pi + (1-\lambda)\smallint  B\,d\pi + \lambda H(\pi) + (1-\lambda)H(\pi)]\\
	&\leq \sup_{\pi_1} [\lambda (\smallint  A\,d\pi_1 + H(\pi_1))] + \sup_{\pi_2}  [(1-\lambda)(\smallint  B\,d\pi_2 +  H(\pi_2))]\\
	&=\lambda P(A) + (1-\lambda)P(B).
	\end{align*} 

5. Supposing $P(A)\geq P(B)$ we get
\begin{align*}
P(A)-P(B) &= [\sup_{\pi_1} \smallint  A\,d\pi_1 +H(\pi_1)]-[\sup_{\pi_2} \smallint  B\,d\pi_2 +H(\pi_2)]\\
&\leq [\sup_{\pi_1} \smallint  B\,d\pi_1 +H(\pi_1)+|A-B|_\infty]-[\sup_{\pi_2} \smallint  B\,d\pi_2 +H(\pi_2)]\\
&=|A-B|_\infty.
\end{align*} 
	
\end{proof}
	
The set $\Pi(\mu,\nu)$ is convex and we say that $\pi \in \Pi(\mu,\nu)$ is a vertex of $\Pi(\mu,\nu)$ if there is not $\pi_1,\pi_2 \in \Pi(\mu,\nu)$ and $\lambda\in (0,1)$ such that $\pi =\lambda \pi_1 +(1-\lambda)\pi_2$.

\begin{proposition} For each $A\in C(X\times Y)$ there is at least one probability $\pi \in \Pi(\mu,\nu)$ such that $P(A)=\smallint  A\,d\pi + H(\pi)$. It is possible that none of the vertices of $\Pi(\mu,\nu)$ reaches the supremum defining $P(A)$.	
\end{proposition}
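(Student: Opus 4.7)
\medskip
\noindent\textbf{Proof plan.} For the existence statement I would exploit that the objective $\pi \mapsto \smallint A\,d\pi + H(\pi)$ is the sum of a weak* continuous term (since $A \in C(X\times Y)$) and an upper semi-continuous term (by Proposition \ref{prop:entropy}, part 3). Moreover $\Pi(\mu,\nu)$ is a weak* closed subset of the compact set $\mathcal{P}(X\times Y)$: the marginal conditions are defined by continuous linear constraints of the form $\smallint f(x)\,d\pi = \smallint f\,d\mu$ and $\smallint g(y)\,d\pi = \smallint g\,d\nu$ for $f\in C(X)$, $g\in C(Y)$. Hence $\Pi(\mu,\nu)$ is weak* compact, and any upper semi-continuous function on a compact set attains its supremum. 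Taking a maximizing sequence $\pi_n$ with $\smallint A\,d\pi_n + H(\pi_n) \to P(A)$, I extract a weak* convergent subsequence $\pi_{n_k}\to \pi_\infty \in \Pi(\mu,\nu)$; then $\smallint A\,d\pi_{n_k} \to \smallint A\,d\pi_\infty$ by continuity of $A$, and $\limsup_k H(\pi_{n_k}) \leq H(\pi_\infty)$ by Proposition \ref{prop:entropy}, which together give $\smallint A\,d\pi_\infty + H(\pi_\infty) \geq P(A)$, hence equality.

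\medskip
For the second claim I would reuse the finite setting of Example \ref{example:entropy}: $X=Y=\{1,2\}$, $\mu=\nu=(1/2,1/2)$, and take $A \equiv 0$. Then $P(0) = \sup_{\pi \in \Pi(\mu,\nu)} H(\pi)$. Since $H(\pi)=-D_{KL}(\pi\,|\,\mu\times\nu)\leq 0$ with equality exactly at $\pi=\mu\times \nu$ (strict convexity of $D_{KL}$ in its first argument), the probability $\mu\times \nu$ is the unique maximizer, so $P(0)=0$ is attained only at $\mu\times\nu$. It then suffices to observe that in this example $\Pi(\mu,\nu)$ is the one-parameter family $\{t\pi_1+(1-t)\pi_2 : t\in[0,1]\}$ with vertices $\pi_1,\pi_2$, and $\mu\times \nu = \tfrac{1}{2}\pi_1 + \tfrac{1}{2}\pi_2$ is the midpoint of this segment, hence not a vertex. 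Consequently no vertex realizes $P(0)$.

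\medskip
The main obstacle is the upper semi-continuity argument, but this has already been established in Proposition \ref{prop:entropy}, so both parts reduce to short, essentially topological considerations once the example from Example \ref{example:entropy} is recalled. The only subtle point to make explicit is the uniqueness of the maximizer in the example, which follows from strict convexity of the Kullback-Leibler divergence; alternatively one can check it directly by computing $H(\pi_t) = -[t\log(2t) + (1-t)\log(2(1-t))]$ along the segment and noting that this is maximized uniquely at $t=1/2$.
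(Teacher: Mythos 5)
Your proof is correct and follows essentially the same route as the paper: weak* compactness of $\Pi(\mu,\nu)$ together with upper semi-continuity of $H$ (Proposition \ref{prop:entropy}) gives existence via a maximizing sequence, and the second claim is handled by the same $2\times 2$ example with $A\equiv 0$. The only small difference is that you additionally justify uniqueness of the maximizer via strict convexity of $D_{KL}$ (or the explicit computation of $H(\pi_t)$), whereas the paper simply computes $H(\pi_1)=H(\pi_2)=-\log 2<0=P(0)$ to conclude directly that the vertices are not optimal; both are valid, and the paper's check is the more economical one since uniqueness is not needed for the stated claim.
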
 		
\begin{proof}
By definition of supremum, for each $n$ there is a probability $\pi_n \in \Pi(\mu,\nu)$ such that 
\[\smallint  A\,d\pi_n + H(\pi_n) \geq P(A) - \frac{1}{n}.\]
As $X\times Y$ is compact there exists a probability $\pi$ and a subsequence $(\pi_{n_j})$ of $(\pi_n)$ such that $\pi_{n_j} \to \pi$ in the weak* topology. We have $\pi \in \Pi(\mu,\nu)$ and as $H$ is upper semi-continuous we get
\[\smallint  A\,d\pi +H(\pi) \geq \limsup_{n_j}[\smallint  A\,d\pi_{n_j} +H(\pi_{n_j})]  = P(A).\]
As the reverse inequality is consequence of the definition of $P$ we get the equality.

Let us present an example where the optimal $\pi$ is unique but it is not a vertex. Consider $X=\{1,2\}$, $Y=\{1,2\}$ $\mu=(1/2,1/2)$, $\nu=(1/2,1/2)$ and the probabilities 
\[\pi_1 = \begin{pmatrix} 1/2&0\\0&1/2\end{pmatrix},\,\, \pi_2 = \begin{pmatrix} 0&1/2\\1/2&0\end{pmatrix},\,\,\pi_3=	\begin{pmatrix} 1/4&1/4\\1/4&1/4\end{pmatrix}.\]
The vertices of $\Pi(\mu,\nu)$ are just the probabilities $\pi_1$ and $\pi_2$. Following Example \ref{example:entropy}, for $A=0$ we get that $P(A) = \sup_{\pi \in \Pi(\mu,\nu)} H(\pi) = 0 = \smallint  A\,d\pi_3 + H(\pi_3)$.
On the other hand, $\smallint  A\,d\pi_1 +H(\pi_1) = -\log(2) = \smallint  A\,d\pi_2 +H(\pi_2)$.  More generally if $A$ is approximately zero the vertices $\pi_1$ and $\pi_2$ are not optimal for the pressure. 
 	
\end{proof}

\section{Proof of  Theorem \ref{main thm}}

\subsection{Proof of item 1}
We can drop $\beta$ in the proof of item 1.

\begin{proposition}\label{prop : existence} There is a pair of functions $\varphi \in C(X)$ and $\psi \in C(Y)$ such that 
	\[\smallint  e^{A(x,y)+\varphi(x)+\psi(y)}d\mu(x) =1\,\forall y\in Y \,\,\, \text{and}\,\,\, \smallint  e^{A(x,y)+\varphi(x)+\psi(y)}d\nu(y) =1\,\forall x\in X.\]
\end{proposition}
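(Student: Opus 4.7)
The plan is to recast the system as a fixed-point problem on a compact convex set of functions and apply Schauder's theorem. Define the two operators $F\colon C(Y)\to C(X)$ and $G\colon C(X)\to C(Y)$ by
\[F(\psi)(x) := -\log \smallint e^{A(x,y)+\psi(y)}\,d\nu(y), \qquad G(\varphi)(y) := -\log \smallint e^{A(x,y)+\varphi(x)}\,d\mu(x).\]
By construction, the pair $(F(\psi),\psi)$ satisfies the second normalization $\smallint e^{A+F(\psi)+\psi}\,d\nu = 1$ for every $x\in X$, and symmetrically $(\varphi,G(\varphi))$ satisfies the first. Hence a solution $(\varphi,\psi)$ corresponds to a $\psi \in C(Y)$ fixed by $T := G \circ F$; I would then set $\varphi := F(\psi)$.

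The key quantitative input is that $\lip(F(\psi)) \leq \lip(A)$ uniformly in $\psi$. Indeed, using $A(x_2,y) \leq A(x_1,y)+\lip(A)\,d_X(x_1,x_2)$ pointwise,
\[F(\psi)(x_1) - F(\psi)(x_2) = \log \frac{\smallint e^{A(x_2,y)+\psi(y)}\,d\nu(y)}{\smallint e^{A(x_1,y)+\psi(y)}\,d\nu(y)} \leq \lip(A)\, d_X(x_1,x_2),\]
and the symmetric bound completes the Lipschitz estimate. An analogous argument gives $\lip(G(\varphi)) \leq \lip(A)$, so $\lip(T\psi) \leq \lip(A)$ for every $\psi \in C(Y)$. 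Now fix $y_0 \in Y$ and let
\[K := \{\psi \in C(Y) : \psi(y_0)=0,\ \lip(\psi) \leq \lip(A)\},\]
which is convex and, by Arzel\`a--Ascoli, compact in $C(Y)$. The renormalized operator $\tilde T\psi := T\psi - (T\psi)(y_0)$ sends $K$ into $K$, and continuity of $\tilde T$ on $K$ follows from dominated convergence, using that every $\psi \in K$ is uniformly bounded by $\lip(A)\cdot \mathrm{diam}(Y)$. Schauder's fixed-point theorem then produces some $\psi^* \in K$ with $\tilde T\psi^* = \psi^*$.

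Set $\varphi^* := F(\psi^*) \in C(X)$; the second normalization holds for $(\varphi^*,\psi^*)$ by definition of $F$. From $\tilde T\psi^* = \psi^*$ one gets $T\psi^* = \psi^* + c$ with $c := (T\psi^*)(y_0)$, i.e.\
\[\smallint e^{A(x,y)+\varphi^*(x)+\psi^*(y)+c}\,d\mu(x) = 1 \quad \text{for all } y\in Y.\]
Integrating this against $d\nu(y)$ yields $e^{c}\iint e^{A+\varphi^*+\psi^*}\,d\mu\,d\nu = 1$, while integrating the second normalization against $d\mu(x)$ yields $\iint e^{A+\varphi^*+\psi^*}\,d\mu\,d\nu = 1$. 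Together these force $e^{c}=1$, so $c=0$ and the first normalization also holds.

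The main obstacle is the one-parameter ambiguity $(\varphi,\psi)\mapsto (\varphi+d,\psi-d)$ intrinsic to the system: it prevents $T$ itself from having fixed points on a compact set. Fixing $\psi(y_0)=0$ in the Schauder setup quotients out this ambiguity, and the \emph{a posteriori} coincidence $c=0$ of the two marginal normalizations, forced by integrating each identity against the other marginal, is what validates the reduction and closes the argument.
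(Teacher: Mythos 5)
Your proof is correct, and it takes a genuinely different route from the paper's. The paper introduces an auxiliary parameter $s\in(0,1)$ and studies the operators $T_\mu^s(f)(y)=-\log\smallint e^{A(x,y)+sf(x)}d\mu(x)$, $T_\nu^s(g)(x)=-\log\smallint e^{A(x,y)+sg(y)}d\nu(y)$: the extra $s$ makes each a strict contraction, so Banach's theorem produces fixed points $(\varphi^s,\psi^s)$, and one then passes to the limit $s\to 1^-$ via Arzel\`a--Ascoli after proving the same uniform Lipschitz bound $\lip(\varphi^s),\lip(\psi^s)\leq\lip(A)$ that you establish for $F(\psi)$, $G(\varphi)$. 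You instead work directly at $s=1$, where $T=G\circ F$ is only nonexpansive, and compensate by restricting to the compact convex set $K=\{\psi(y_0)=0,\ \lip(\psi)\leq\lip(A)\}$ and invoking Schauder; the normalization $\psi(y_0)=0$ plays the role that $\max(\cdot)=0$ plays in the paper's renormalization $\varphi^s-\max(\varphi^s)$. Both arguments close with the identical Fubini step forcing the residual constant ($c$ in your notation, $l_1-l_2$ in the paper's) to vanish. Your version is shorter and avoids the limit entirely at the cost of using Schauder rather than the elementary Banach contraction principle; the paper's perturbative version is closer to the Sinkhorn-iteration picture it cites and, as a side effect, produces explicit bounds on the normalization constant (the paper's equation (5), i.e.\ the estimate on $l$) that are reused later in the proof of item~3 of the main theorem, which your construction does not directly supply.
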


\begin{remark}
	If $X=Y$ is a finite set and $\mu=\nu$ is the probability with uniform distribution, we are constructing a double stochastic matrix and such proposition can be found in \cite{R, SK}.
\end{remark}

The proof of this proposition follows some ideas of \cite{bousch} (see also \cite{MO3}).

\begin{proof}  
For each  $0<s<1$, we define  $T_\mu^s: C(X)\to C(Y) $ by
$$T_\mu^s(f)(y) =- \log \int _X e^{A(x,y)+sf(x)}\, d\mu(x)$$
and  $T_\nu^s: C(Y)\to C(X) $ by
$$T_\nu^s(g)(x) =- \log \int _Y e^{A(x,y)+sg(y)}\, d\nu(y).$$

 We have 
\begin{align*}|T_\mu^s(f_1)-T_\mu^s(f_2)|_{\infty}&=\sup_{y} |\log \int  _X e^{A(x,y)+sf_1(x)}\, d\mu(x)- \log \int  _X e^{A(x,y)+sf_2(x)}\, d\mu(x)|\\&\leq s|f_1-f_2|_{\infty}\end{align*}
and analogously
$|T_\nu^s(g_1)-T_\nu^s(g_2)|_{\infty}\leq s|g_1-g_2|_{\infty}.$
It follows that $T_\nu^s\circ T_\mu^s$ is a contraction on $C(X)$ and  $T_\mu^s\circ T_\nu^s$ is a contraction on $C(Y)$ for the supremum norm.
Consequently they have unique fixed points $\varphi^s$ and $\psi^s$, respectively. 

We claim that  $T_\mu^s(\varphi^s)=\psi^s$ and $T_\nu^s(\psi^s)=\varphi^s$. Indeed, denoting $g^s:=T_\mu^s(\varphi^s)$ we get
$T_\mu^s\circ T_\nu^s (g^s) =  T_\mu^s\circ T_\nu^s \circ T_\mu^s(\varphi^s)= T_\mu^s(\varphi^s) = g^s$. As  $\psi^s$ is the unique fixed point of $T_\mu^s\circ T_\nu^s$ we conclude that $g^s=\psi^s$ and then $\psi^s=T_\mu^s(\varphi^s)$. Furthermore
$T_\nu^s(\psi^s) = T_\nu^s(T_\mu^s(\varphi^s))=\varphi^s.$

We claim that for any $s\in (0,1)$, the functions  $\varphi^s$ and $\psi^s$ are Lipschitz continuous, with $\lip(\varphi^s) \leq \lip(A)$ and $\lip(\psi^s) \leq \lip(A)$. Indeed, for any $x_1,x_2 \in X$ we have
\begin{align*} 
	\varphi^s(x_1)  &= -\log\int   e^{A(x_1,y)+s\psi^s(y)}\,d\nu(y) = -\log\int   e^{A(x_2,y) - A(x_2,y) +A(x_1,y)+s\psi^s(y)}\,d\nu(y)\\
	&\leq -\log\int   e^{A(x_2,y) -\lip(A)\cdot d(x_1,x_2)+s\psi^s(y)}\,d\nu(y)\\&=	-\log\int   e^{A(x_2,y) +s\psi^s(y)}\,d\nu(y)+\lip(A)\cdot d(x_1,x_2)\\& = \varphi^s(x_2) +\lip(A)\cdot d(x_1,x_2)\end{align*}
It follows that $ 	\frac{\varphi^s(x_1) - \varphi^s(x_2)}{d(x_1,x_2)}\leq \lip(A)$ for any $x_1,x_2 \in X$. Analogously we can obtain that $\lip(\psi^s) \leq \lip(A)$.

We have that $-\lip(A)\cdot diam(X)\leq \varphi^s-\max(\varphi^s)\leq 0$ and then the family $(\varphi^s-\max(\varphi^s))_{0<s<1}$ is equicontinuous and uniformly bounded and similarly to $(\psi^s-\max(\psi^s))_{0<s<1}$. Before to apply Arzel\`{a}-Ascoli theorem, let us to consider the numbers $\max(\psi^s)+s\max(\varphi^s)$. 

We claim that \begin{equation*}
	-\max(A)\leq \max(\psi^s)+s\max(\varphi^s) \leq lip(A)\cdot diam(X) -min(A).
\end{equation*}
 Indeed, for any $y\in Y$ we have
$e^{-\psi^s(y)} = \int   e^{A(x,y) +s\varphi^s(x)}\,d\mu(x)$ and then $-\psi^s(y)\leq \max(A) + s\max(\varphi^s)$. Consequently $-\max(A) \leq \max(\psi^s) + s\max(\varphi^s).$ On the other hand,
	$$-\psi^s(y) \geq \min(A) + s\min(\varphi^s) \geq \min(A) + s\max(\varphi^s) - s\lip(A)\cdot diam(X) $$
and then $-\min(A) + \lip(A)\cdot diam(X) \geq \max(\psi^s) + s\max(\varphi^s)$. This concludes the proof of the claim. 

With similar arguments we conclude that $\max(\varphi^{s})+s\max(\psi^{s})$ is bounded. Therefore, there exist  an increasing sequence of positive numbers $s_n$  which converges to 1, real numbers  $l_1$, $l_2$ and functions $\varphi_{\beta}\in C(X)$ and $\psi_\beta \in C(Y)$  such that $\max(\psi^{s_n})+s_n\max(\varphi^{s_n})\to l_1$, $\max(\varphi^{s_n})+s_n\max(\psi^{s_n})\to l_2$, $\varphi^{s_n}-\max(\varphi^{s_n}) \to \varphi$ (uniformly) and $\psi^{s_n}-\max(\psi^{s_n}) \to \psi $ (uniformly). Particularly we get $\max(\varphi_\beta)=\max(\psi_\beta)=0$, $\lip(\varphi)\leq \lip(A)$ and $\lip(\psi)\leq \lip(A)$.

We have,
\begin{align*} e^{-\psi(y)} &= \lim_{n}e^{-\psi^{s_n}(y)+\max(\psi^{s_n})} = \lim_n \int   e^{A(x,y) +s_n\varphi^{s_n}(x)}\,d\mu(x) e^{\max(\psi^{s_n})} \\
	&= \lim_n \int   e^{A(x,y) +s_n(\varphi^{s_n}(x) - \max(\varphi^{s_n}))}\,d\mu(x) e^{\max(\psi^{s_n})+s_n\max(\varphi^{s_n})}\\
&=\int   e^{A(x,y) + \varphi(x)}\,d\mu(x)\cdot e^{l_1}.\end{align*}
Consequently
$$\int   e^{A(x,y) + \varphi(x)+\psi(y) +l_1}\,d\mu(x)=1\,\,\forall y.$$
With similar computations we get 
$$\int   e^{A(x,y) + \varphi(x)+\psi(y) +l_2}\,d\nu(y)=1\,\,\forall x.$$ 
Applying Fubini's Theorem we can conclude that
\begin{align*}
	\iint &e^{A(x,y) + \varphi(x)+\psi(y) +l_1}\,d\nu(y)d\mu(x)  = 1= \iint e^{A(x,y) + \varphi(x)+\psi(y) +l_2}\,d\nu(y)d\mu(x).\end{align*}
Then $l_1=l_2$. Finally, replacing $\psi$ by $\psi+l$ where $l:=l_1=l_2$ we complete the proof. As a final remark, observe that by construction, the number $l$ satisfies
\begin{equation}\label{eq10}
	-\max(A) \leq l \leq  lip(A)\cdot diam(X) -min(A)
\end{equation}
\end{proof} 

The proof of next proposition follows ideas of \cite{R, SK}.

\begin{proposition} Suppose there are two pair of continuous functions $(\varphi_1,\psi_1)$, $(\varphi_2,\psi_2)$ such that 
	\begin{equation}\label{eq1} \smallint  e^{A(x,y)+\varphi_i(x)+\psi_i(y)}d\mu(x) =1\,\forall y\in Y \,\,\, \text{and}\,\,\, \smallint  e^{A(x,y)+\varphi_i(x)+\psi_i(y)}d\nu(y) =1\,\forall x\in Y.
		\end{equation} 
	Then there  is a constant $d$ such that $\varphi_2 =\varphi_1+d$ and $\psi_2=\psi_1-d$.
\end{proposition}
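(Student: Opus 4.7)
The plan is to exploit Kullback--Leibler divergences of the two probabilities associated with the pairs. Define $\pi_i\in\mathcal{P}(X\times Y)$ to have density $e^{A+\varphi_i+\psi_i}$ with respect to $\mu\times\nu$, for $i=1,2$. The hypotheses \eqref{eq1} say exactly that $\pi_i\in\Pi(\mu,\nu)$. Both densities are strictly positive, so $\pi_1$ and $\pi_2$ are mutually absolutely continuous, with
\[
\log\frac{d\pi_2}{d\pi_1}(x,y)=(\varphi_2-\varphi_1)(x)+(\psi_2-\psi_1)(y),
\]
a continuous (hence bounded) function on $X\times Y$ that splits additively in the two variables.

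The crucial step is that this splitting makes $D_{KL}(\pi_2\,|\,\pi_1)$ collapse to a marginal expression. Using that $\pi_2$ has marginals $\mu$ and $\nu$,
\[
D_{KL}(\pi_2\,|\,\pi_1)=\int(\varphi_2-\varphi_1)\,d\mu+\int(\psi_2-\psi_1)\,d\nu,
\]
and by the analogous calculation $D_{KL}(\pi_1\,|\,\pi_2)$ is the negative of the same number. Since both divergences are nonnegative, each must be zero, and the strict property of Kullback--Leibler divergence then forces $\pi_1=\pi_2$.

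With $\pi_1=\pi_2$, their continuous positive densities agree $\mu\times\nu$-a.e., and since $\supp(\mu)=X$ and $\supp(\nu)=Y$ this extends by continuity to every $(x,y)\in X\times Y$. Therefore $\varphi_1(x)+\psi_1(y)=\varphi_2(x)+\psi_2(y)$ everywhere; rewriting as $(\varphi_2-\varphi_1)(x)=(\psi_1-\psi_2)(y)$ with the left side depending only on $x$ and the right only on $y$, both must equal a common constant $d$, giving $\varphi_2=\varphi_1+d$ and $\psi_2=\psi_1-d$. I do not expect a real obstacle here. The only step requiring care is the passage from almost-everywhere to everywhere equality, which is precisely where continuity of the four functions and the full-support assumption on $\mu,\nu$ enter.
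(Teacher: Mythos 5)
Your proof is correct and takes a genuinely different route from the paper's. The paper argues directly on the potentials in the spirit of Sinkhorn's matrix-scaling uniqueness: it sets $p=\varphi_2-\varphi_1$, $q=\psi_2-\psi_1$, compares the two integral conditions at the points where $p$ attains its minimum and $q$ its maximum to conclude $\min p = -\max q$, and then rules out any non-constancy of $p$ (and $q$) by a contradiction on a ball of positive $\mu$-measure where the integrand would exceed $1$; full support enters only at that last step. You instead pass through the associated plans $\pi_1,\pi_2\in\Pi(\mu,\nu)$ and the additive splitting of $\log(d\pi_2/d\pi_1)$: because the two plans share marginals $\mu$ and $\nu$, the relative entropies $D_{KL}(\pi_2|\pi_1)$ and $D_{KL}(\pi_1|\pi_2)$ reduce to opposite marginal integrals, so both vanish and strict positivity of Kullback--Leibler divergence gives $\pi_1=\pi_2$; full support is then used to upgrade the a.e.\ equality of continuous positive densities to an identity everywhere, after which the separation-of-variables argument gives the constant $d$. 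Your route is shorter and more conceptual---it identifies the plan itself as the invariant, not just the potentials, and fits the information-theoretic framing of the paper---at the price of invoking the strict equality case of $D_{KL}\geq 0$; the paper's argument is longer but entirely elementary and self-contained. Both are valid, and the one genuinely delicate point in yours (that boundedness of the continuous log-ratio makes both divergences finite, so the equality $D_{KL}(\pi_2|\pi_1)=-D_{KL}(\pi_1|\pi_2)$ is legitimate) is correctly noted.
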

\begin{proof}
	Let $p(x) = \varphi_2(x) - \varphi_1(x)$ and $q(y) = \psi_2(y)-\psi_1(y)$. We have
	\begin{equation}\label{eq2} 
				\smallint  e^{A(x,y)+\varphi_1(x)+p(x)+\psi_1(y)+q(y)}d\mu(x) =1\,\forall y\in Y 
			\end{equation}
	and
	\begin{equation}\label{eq3} 
		\smallint  e^{A(x,y)+\varphi_1(x)+p(x)+\psi_1(y)+q(y)}d\nu(y) =1\,\forall x\in X. 
	\end{equation}
Let us denote by $x_0$ and $y_0$ two points satisfying
$p_0 := p(x_0) = \min_{x\in X} p(x)$ and $q_0 := q(y_0) = \max_{y\in Y} q(y).$
For any $y\in Y$ we have
$$e^{q(y)} \stackrel{\eqref{eq2}}{=}\frac{1}{\smallint  e^{A(x,y) +\varphi_1(x)+p(x)+\psi_1(y)}d\mu(x)} \leq \frac{1}{e^{p_0}\smallint  e^{A(x,y) +\varphi_1(x)+\psi_1(y)}d\mu(x)}\stackrel{\eqref{eq1}}{=} e^{-p_0}$$
and for any $x\in X$ we have
$$e^{p(x)} \stackrel{\eqref{eq3}}{=}\frac{1}{\smallint  e^{A(x,y) +\varphi_1(x)+\psi_1(y)+q(y)}d\nu(y)} \geq \frac{1}{e^{q_0}\smallint  e^{A(x,y) +\varphi_1(x)+\psi_1(y)}d\nu(y)}\stackrel{\eqref{eq1}}{=} e^{-q_0}.$$
It follows that $q(y)\leq -p_0$ and $p(x) \geq -q_0$ and then $-p_0 = q_0$. 

Now we will prove that $p$ is constant. Indeed, if $p(\tilde{x}) > p_0$ for some $\tilde{x}$, then, denoting by $\epsilon=\frac{p(\tilde{x})-p_0}{2}>0$, there exists $\delta>0$ such that $p(x)-p_0>\epsilon$ for all $x$ in the ball $B(\tilde{x},\delta)$. It follows that
$$1 \stackrel{\eqref{eq2}}{=} \smallint  e^{A(x,y) +\varphi_1(x)+p(x)+\psi_1(y_0)+q_0}d\mu(x) \stackrel{q_0=-p_0}{=} \smallint  e^{A(x,y) +\varphi_1(x)+\psi_1(y_0)}e^{p(x)-p_0}d\mu(x)$$
$$\geq\smallint  e^{A(x,y) +\varphi_1(x)+\psi_1(y_0)}d\mu(x) + \smallint _{B(\tilde{x},\delta)} e^{A(x,y) +\varphi_1(x)+\psi_1(y_0)}(e^{\epsilon}-1)d\mu(x) \stackrel{\eqref{eq1}}{>}1$$ 
This is a contradiction. Consequently $p(x) = p_0$ for any $x\in X$. Analogously we can show that $q$ is constant. Finally as $
	q_0=-p_0$ we get $q=-p$ and the proof is complete.

\end{proof}

\subsection{Proof of item 2}

Consider $\varphi_\beta$ and $\psi_\beta$ as defined in item 1. Then let us  define the probability $\pi_\beta\in \mathcal{P}(X\times Y)$ by
\[\smallint  u(x,y)\,d\pi_\beta(x,y) := \smallint   e^{\beta A(x,y)+\varphi_\beta(x)+\psi_\beta(y)}u(x,y)d\mu(x) \,d\nu(y),\]
where $u\in C(X\times Y)$.
We have then  $d\pi_\beta := e^{\beta A+\varphi_\beta+\psi_\beta}d\mu \,d\nu$ and consequently $\pi_\beta\ll \mu\times \nu$ with Radon-Nikodym derivative $\frac{d\pi_\beta}{d(\mu\times \nu)}=e^{\beta A+\varphi_\beta+\psi_\beta}$. 
Observe that for any $f\in C(X)$, applying Fubini's Theorem, we have
\begin{align*}
	\smallint  f(x)\,d\pi(x,y) &= \smallint   e^{\beta A(x,y)+\varphi_\beta(x)+\psi_\beta(y)}f(x)d\mu(x) \,d\nu(y)\\& = \smallint   e^{\beta A(x,y)+\varphi_\beta(x)+\psi_\beta(y)}d\nu(y) f(x)\,d\mu(x) \\&
=\smallint   1\cdot f(x)\,d\mu(x) =\smallint   f(x)\,d\mu(x). \end{align*}
Analogously we have $\smallint  g(y)\,d\pi(x,y) = \smallint   g(y)\,d\nu(y) \,\,\forall g\in C(Y) $ and consequently 
 $\pi_\beta \in \Pi(\mu,\nu)$.

By definition of entropy $H$, as $\frac{d\pi_\beta}{d(\mu\times \nu)}=e^{\beta A+\varphi_\beta+\psi_\beta}$ we have 
$$H(\pi_\beta) = -\smallint  \beta A+\varphi_\beta+\psi_\beta \,d\pi_\beta = -\smallint  \beta A d\pi_\beta -\smallint  \varphi_\beta\,d\mu -\smallint  \psi_\beta\,d\nu $$
and then
\begin{equation}\label{eq6}
\smallint  \beta A\,d\pi_\beta +H(\pi_\beta) = -\smallint  \varphi_\beta\,d\mu - \smallint  \psi_\beta\,d\nu .
\end{equation} 
Let us show now that
\begin{equation}\label{eq5}
	P(\beta A) = \sup_{\pi \in \Pi(\mu,\nu)} \smallint  \beta A\,d\pi + H(\pi) = -\smallint  \varphi_\beta\,d\mu - \smallint  \psi_\beta\,d\nu. 
\end{equation}
Let $\pi \in \Pi(\mu,\nu)$ be any transference plan. From equation \eqref{eq7} we have 
\[H(\pi) = \inf\{-\smallint  u(x,y)d\pi\,|\, \smallint  e^{u(x,y)}d\mu(x)=1\,\forall y,\,\, u \,\text{Lipschitz}\}.\]
In this way
$H(\pi) \leq - \smallint  \beta A+\varphi_\beta +\psi_\beta \,d\pi $ and then, as $\pi \in \Pi(\mu,\nu)$,
$$ \smallint  \beta A \,d\pi +H(\pi) \leq - \smallint  \varphi_\beta\,d\mu -\smallint  \psi_\beta\,d\nu.$$
This inequality combined with  \eqref{eq6} concludes the proof of equation \eqref{eq5}.

\subsection{Proof of item 3} 

From item 1. we have $\lip(\varphi_\beta)\leq \lip(\beta A)$ and then $\lip(\frac{\varphi_\beta}{\beta})\leq \lip(A)$. Analogously $\lip(\frac{\psi_\beta}{\beta})\leq \lip(A)$. Furthermore, (following the proof of Proposition \ref{prop : existence}) we can suppose that $-\lip(A)diam(X)\leq \varphi_\beta/\beta\leq 0$ and $-\lip(A)diam(Y)+\frac{l_\beta}{\beta}\leq \psi_\beta/\beta \leq \frac{l_\beta}{\beta}$, where $l_\beta$ satisfies (see equation \eqref{eq10}) 
\[-\beta\max(A) \leq l_\beta \leq \beta \lip(A) diam(X) - \beta \min(A).\]

 From Arzel\`{a}-Ascoli Theorem, there exists an increasing sequence $\beta_n\to+\infty$ and Lipschitz functions $\varphi$ and $\psi$ such that $\frac{\varphi_{\beta_n}}{\beta_n}$ converges uniformly to $\varphi$ and $\frac{\psi_{\beta_n}}{\beta_n}$ converges uniformly to $\psi$.

A proof of the following result can be found in \cite{MO3}.

\begin{lemma}\label{lmms}
	Let $U,Z$ be compact metric spaces. Let $W_{\beta}:U\times Z\to \mathbb{R}$ be a family of measurable functions converging uniformly to a continuous function $W:U\times Z\to \mathbb{R}$, as $\beta\to +\infty,$ and let $\rho$ be a finite measure on $U$ with $\supp(\rho) =U$. Then
	\[\frac{1}{\beta} \log \int _U e^{\beta W_\beta(u,z)} d\rho(u) \to \sup_{u\in U}\, W(u,z)\]  uniformly on $Z,$ as $\beta\to+\infty$.
	The same is true if we replace $\beta$ by a sequence $\beta_n$ which converges to $+\infty$.
\end{lemma}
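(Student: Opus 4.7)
The plan is to adapt the classical Laplace-type argument, carefully tracking uniformity in $z$. Fix $\epsilon>0$. Since $W$ is continuous on the compact product $U\times Z$ it is uniformly continuous, so I can choose $\delta>0$ such that $|W(u_1,z)-W(u_2,z)|<\epsilon$ whenever $d_U(u_1,u_2)<\delta$, uniformly in $z\in Z$. Likewise, by the uniform convergence hypothesis, $|W_\beta-W|_\infty<\epsilon$ for all $\beta$ large enough.

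For the \emph{upper bound}, I would simply write
\[\int_U e^{\beta W_\beta(u,z)}\,d\rho(u) \leq e^{\beta(\sup_u W(u,z)+\epsilon)}\,\rho(U),\]
so that taking $\tfrac{1}{\beta}\log$ yields $\tfrac{1}{\beta}\log\int e^{\beta W_\beta}\,d\rho \leq \sup_u W(u,z)+\epsilon+\tfrac{\log\rho(U)}{\beta}$, uniformly in $z$.

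For the \emph{lower bound}, for each $z$ pick $u_z\in U$ with $W(u_z,z)=\sup_u W(u,z)$ (such a maximizer exists by compactness). On the ball $B(u_z,\delta)$ one has $W(u,z)\geq \sup_u W(u,z)-\epsilon$, hence
\[\int_U e^{\beta W_\beta(u,z)}\,d\rho(u) \geq e^{\beta(\sup_u W(u,z)-2\epsilon)}\,\rho(B(u_z,\delta)).\]
To turn this into a uniform estimate, I need to bound $\rho(B(u_z,\delta))$ from below by a positive constant independent of $z$. This is where the hypothesis $\supp(\rho)=U$ is used: the main step of the argument is the claim that there exists $m=m(\delta)>0$ with $\rho(B(u,\delta))\geq m$ for every $u\in U$. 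This follows by a compactness argument — if not, a sequence $u_n$ with $\rho(B(u_n,\delta))\to 0$ has a subsequence converging to some $u_0\in U$, and for large $n$, $B(u_0,\delta/2)\subset B(u_n,\delta)$, contradicting $\rho(B(u_0,\delta/2))>0$ which follows from $u_0\in\supp(\rho)$.

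Combining both bounds gives, for all sufficiently large $\beta$ and uniformly in $z$,
\[\sup_u W(u,z)-2\epsilon+\tfrac{\log m}{\beta} \leq \tfrac{1}{\beta}\log\int_U e^{\beta W_\beta(u,z)}\,d\rho(u) \leq \sup_u W(u,z)+\epsilon+\tfrac{\log\rho(U)}{\beta}.\]
Since $\epsilon$ was arbitrary, this proves uniform convergence. The main obstacle is the uniform-in-$z$ lower bound on $\rho(B(u_z,\delta))$; everything else is a routine sandwich argument. Replacing $\beta$ by a sequence $\beta_n\to+\infty$ requires no change since the same inequalities hold as soon as $n$ is large enough.
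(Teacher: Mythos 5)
Your proof is correct. Since the paper itself defers the proof of this lemma to \cite{MO3} rather than giving one, a direct comparison is not possible, but your argument is the standard Laplace-method sandwich and it is carried out rigorously. The one nontrivial step---and you correctly identify it as such---is the uniform-in-$z$ lower bound on the mass of the balls $B(u_z,\delta)$. Your compactness argument for the claim $\inf_{u\in U}\rho(B(u,\delta)) =: m(\delta) > 0$ is sound: if $\rho(B(u_n,\delta))\to 0$ and $u_{n_k}\to u_0$, then for $k$ large $B(u_0,\delta/2)\subset B(u_{n_k},\delta)$, forcing $\rho(B(u_{n_k},\delta))\geq \rho(B(u_0,\delta/2))>0$, the last inequality using $u_0\in\supp(\rho)=U$; contradiction. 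The rest (upper bound via $W_\beta\leq W+\epsilon$, lower bound via uniform continuity of $W$ on the compact $U\times Z$, and the vanishing of $\tfrac{1}{\beta}\log m$ and $\tfrac{1}{\beta}\log\rho(U)$) is routine and correctly executed, and the conclusion for a sequence $\beta_n\to+\infty$ follows immediately since all estimates hold for all sufficiently large $\beta$.
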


Using this lemma and taking $\lim_{\beta_n \to+\infty}\frac{1}{\beta_n}\log$ in both sides of the equations
\begin{equation*} \smallint  e^{\beta A(x,y)+\varphi_\beta(x)+\psi_\beta(y)}d\mu(x) =1\,\forall y\in Y \,\,\, \text{and}\,\,\, \smallint  e^{\beta A(x,y)+\varphi_\beta(x)+\psi_\beta(y)}d\nu(y) =1\,\forall x\in X
\end{equation*} 
we get
\begin{equation}\label{eq4} \sup_{x\in X} A(x,y) + \varphi(x)+\psi(y) = 0 \,\forall y\in Y \,\,\text{and} \,\,  \sup_{y\in Y} A(x,y) + \varphi(x)+\psi(y) = 0 \,\forall x\in X. 
\end{equation}
Particularly $\sup_{x,y} A(x,y) + \varphi(x)+\psi(y) = 0$ and then, as $A=-c$ we get $(\varphi,\psi) \in \Phi_c$. 
Furthermore, $(\varphi,\psi)$ is a pair of conjugate $c-$concave functions (see \cite{Vi1} p. 33), that is,
\begin{equation}\label{eq8} 
	\psi(y)=\inf_{x\in X}[ c(x,y) - \varphi(x)] \,\forall y\in Y \,\,\text{and} \,\,  \varphi(x)=\inf_{y\in Y} [c(x,y) -\psi(y)]\,\forall x\in X. 
\end{equation}

If $\pi_\infty$ is any limit of $\pi_\beta$ in the weak* topology (as $\beta \to+\infty$), then clearly $\pi_\infty \in \Pi(\mu,\nu)$. Let us suppose that $\pi_\infty$ is a limit from a subsequence of $\beta_n$, which we also denote by $\beta_n$. Taking $\lim_{\beta_n\to+\infty}\frac{1}{\beta_n}(\cdot)$ in both sides of equation \eqref{eq6} and using that $H$ is non-positive we get
$$\smallint   A\,d\pi_\infty  \geq -\smallint  \varphi\,d\mu - \smallint  \psi\,d\nu, $$ 
and then  (as $A=-c$)
$$
	 \smallint  c\,d\pi_\infty \leq \smallint  \varphi\,d\mu + \smallint  \psi\,d\nu. 
$$
Consequently 
$$\alpha(c) \leq \smallint  c\,d\pi_\infty \leq \smallint  \varphi\,d\mu + \smallint  \psi\,d\nu\leq \alpha(c)$$
( for the last inequality we use that for any $\pi \in \Pi(\mu,\nu)$ we have $  \smallint  \varphi\,d\mu + \smallint  \psi\,d\nu\leq \smallint  c\,d\pi$, because $\varphi+\psi\leq c$). Therefore
$\alpha(c) = \smallint  \varphi\,d\mu + \smallint  \psi\,d\nu = \smallint  c\,d\pi_\infty$. Particularly we conclude that any limit $(\varphi,\psi)$ is an optimal dual pair.

On the other hand, if we start by considering $\pi_\infty$ as any limit of $\pi_{\beta}$, for example taking an increasing sequence $\beta_m$, then from such sequence we  can take a subsequence (also denoted by $\beta_m$) and Lipschitz functions $\varphi$ and $\psi$ such that $\frac{\varphi_{\beta_m}}{\beta_m}$ converges uniformly to $\varphi$ and $\frac{\psi_{\beta_m}}{\beta_m}$ converges uniformly to $\psi$. Finally,  we can reply above computations to reach the same conclusion and then $\pi_\infty$ is an optimal transference plan.

\section{Other proofs} 

\subsection{Proof of Corollary \ref{cor:KR}}

We follow the proof of item 3. of Theorem \ref{main thm} and consider an increasing sequence $\beta_n\to+\infty$ and a  function $\varphi$ such that $\frac{\varphi_{\beta_n}}{\beta_n}$ converges uniformly to $\varphi$. We can take a subsequence of $(\beta_n)$ also denoted by $(\beta_n)$ such that $(\frac{\psi_{\beta_n}}{\beta_n})$ converges uniformly to a function $\psi$. We remark that $\lip(\varphi)\leq \lip (c)$ and  $\lip(\psi)\leq \lip(c)$.

Supposing $X=Y$ and $c(x,y)=d(x,y)$ (distance function) we obtain from equation \eqref{eq8} 
\begin{equation}\label{eq9} 
	\psi(y)=\inf_{x\in X} [d(x,y) - \varphi(x)] \,\forall y\in Y \,\,\text{and} \,\,  \varphi(x)=\inf_{y\in Y} [d(x,y) -\psi(y)]\,\forall x\in X. 
\end{equation}

We remark that $\lip(d)= 1.$ Indeed, for any $(x_1,y_1)\neq (x_2,y_2) \in X\times X$ we have
\begin{align*} \frac{d(x_1,y_1)-d(x_2,y_2)}{d_{X\times Y} ((x_1,y_1),(x_2,y_2))} &= \frac{d(x_1,y_1)-d(x_2,y_2)}{d(x_1,x_2)+d(y_1,y_2)} \leq \frac{d(x_1,x_2)+d(x_2,y_1)-d(x_2,y_2)}{d(x_1,x_2)+d(y_1,y_2)}\\&\leq  \frac{d(x_1,x_2)+d(y_1,y_2)}{d(x_1,x_2)+d(y_1,y_2)}=1 
\end{align*}
and for $x_1\neq x_2=y_1=y_2$ we reach the equality.

Let us show that $\psi=-\varphi$. Analyzing just the function $\psi$, as $\lip(\psi)\leq  1$ we get $\psi(y)-\psi(x) \leq 1\cdot d(x,y) $ for any $x,y$ and consequently $-\psi(x)\leq \inf_y [d(x,y) - \psi(y)]$. As (considering $y=x$ in the infimum)  $-\psi(x)\geq \inf_y [d(x,y) - \psi(y)]$ we conclude that $-\psi(x)= \inf_y [d(x,y) - \psi(y)] \stackrel{\eqref{eq9}}{=}\varphi(x)$.

Applying item 3. of Theorem \ref{main thm} we get
\[\alpha(d)=\alpha(c) = \smallint \varphi\,d\mu + \smallint \psi \,d\nu = \smallint \varphi\,d\mu- \smallint \varphi \,d\nu =\smallint \varphi\,d(\mu-\nu).\]

\subsection{Proof of Proposition \ref{prop:LDP}}
We suppose that there exists the uniform limits $\varphi=\lim_{\beta\to+\infty}\frac{\varphi_{\beta}}{\beta}$, $ \psi=\lim_{\beta\to+\infty}\frac{\psi_{\beta}}{\beta}$ and the weak* limit $\pi=\lim_{\beta\to+\infty}\pi_{\beta}$. We will show that $\pi_{\beta}$ satisfies a large deviation principle with rate function $I(x,y) = c(x,y)-\varphi(x)-\psi(y)$.

\begin{definition}
	Let $(\mu_\beta)_{\beta>0}$ be a family of probabilities on a metric space $\Omega$. We say that $(\mu_\beta)$ satisfy a large deviation principle (LDP) if there exists a lower semi-continuous rate function $I: \Omega \rightarrow[0,+\infty]$ such that\\
	1. 
	$
	\limsup_{\beta \to + \infty } \frac{1}{\beta} \log \mu_\beta(C) \leq-\inf _{\omega \in C} I(\omega),
	$ for any closed set $C \subset \Omega$;\newline
	2. 	$
	\liminf_{\beta \to + \infty } \frac{1}{\beta} \log \mu_\beta(U) \geq-\inf _{\omega \in U} I(\omega),
	$ for any open set $U \subset \Omega$.
\end{definition}

 We refer \cite{DZ}  for general results concerning large deviations. We will apply the following result in this section:

\begin{lemma}\label{lemma: Bric}
	Consider the family $(\pi_\beta)_{\beta>0}$ of probabilities on $X\times Y$. Suppose that for any function $f\in C(X\times Y)$ there exists the limit
	\[\Gamma(f):=\lim_{\beta\to\infty}\frac{1}{\beta}\log (\smallint e^{\beta {f}}\,d\pi_\beta).\]
	Then $(\pi_{\beta})$ satisfies a LDP with rate function
	$I(x,y) = \sup_{x,y}[f(x,y) - \Gamma(f)].$ Furthermore, 
	\begin{equation}\label{eq12} \Gamma(f) = \sup_{x,y}[f(x,y)-I(x,y)],\,\forall \,f\in C(X\times Y).\end{equation}
\end{lemma}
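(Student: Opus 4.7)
The plan is to recognize Lemma \ref{lemma: Bric} as the compact-space case of Bryc's inverse Varadhan lemma. The rate function is naturally defined as the Legendre-type transform $I(x,y) := \sup_{f\in C(X\times Y)} [f(x,y) - \Gamma(f)]$ (this is the intended reading of the formula in the statement, where the supremum must be over $f$, not over $(x,y)$). Since $\Gamma(0)=0$ this gives $I\geq 0$, and as a pointwise supremum of continuous functions $I$ is automatically lower semi-continuous, hence qualifies as a rate function.

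For the LDP upper bound, I would fix a closed set $C\subset X\times Y$ and $\alpha < \inf_C I$. For each $(x,y)\in C$, the definition of $I$ provides $f_{x,y}\in C(X\times Y)$ with $f_{x,y}(x,y)-\Gamma(f_{x,y}) > \alpha$; by continuity this persists on an open neighborhood $U_{x,y}$ up to an $\epsilon$-loss. Compactness of $C$ yields a finite subcover $\{U_i\}_{i=1}^N$, and for each $i$ Chebyshev gives
\[
\pi_\beta(U_i) \;\leq\; e^{-\beta(\alpha+\Gamma(f_i)-\epsilon)}\int e^{\beta f_i}\,d\pi_\beta,
\]
so that $\limsup \frac{1}{\beta}\log \pi_\beta(U_i)\leq -\alpha+\epsilon$. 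The elementary identity $\limsup \frac{1}{\beta}\log \sum_i a_\beta^{(i)} = \max_i \limsup \frac{1}{\beta}\log a_\beta^{(i)}$ then yields $\limsup \frac{1}{\beta}\log \pi_\beta(C) \leq -\alpha + \epsilon$, and the conclusion follows upon letting $\epsilon \to 0$ and $\alpha \to \inf_C I$.

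For the LDP lower bound, I would fix an open $U$ and $(x_0,y_0)\in U$, and aim at $\liminf\frac{1}{\beta}\log\pi_\beta(U) \geq -I(x_0,y_0)$. For any $M>0$, Urysohn's lemma (or an explicit tent function built from the metric on $X\times Y$) produces $f\in C(X\times Y)$ with $f(x_0,y_0)=0$, $f\leq 0$ everywhere, and $f\equiv -M$ on $U^c$. The bound $\int e^{\beta f}\,d\pi_\beta \leq \pi_\beta(U) + e^{-\beta M}$, combined with $\log(a+b)\leq \log 2+\max(\log a,\log b)$, gives
\[
\Gamma(f) \;\leq\; \max\Bigl(\liminf_{\beta\to\infty}\tfrac{1}{\beta}\log\pi_\beta(U),\; -M\Bigr).
\]
Since $\Gamma(f) \geq f(x_0,y_0) - I(x_0,y_0) = -I(x_0,y_0)$ by definition of $I$, choosing $M > I(x_0,y_0)$ forces $-M < \Gamma(f)$, so the max must equal the first argument; this yields $\liminf\frac{1}{\beta}\log\pi_\beta(U) \geq \Gamma(f) \geq -I(x_0,y_0)$. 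Taking supremum over $(x_0,y_0)\in U$ closes the LDP.

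Finally, for equation \eqref{eq12}, the inequality $\sup_{x,y}[f-I] \leq \Gamma(f)$ is immediate from the definition of $I$; the reverse $\Gamma(f) \leq \sup_{x,y}[f-I]$ is Varadhan's lemma (see \cite{DZ}) applied to the LDP just established, the moment/tail hypothesis being vacuous because $X\times Y$ is compact and $f$ continuous hence bounded. I expect the main obstacle to be the lower-bound step: constructing the test function $f$ with the three required properties and then cleanly executing the $\max$/$\liminf$ argument so that the correct branch of the max survives. Everything else reduces to standard Chebyshev plus compactness bookkeeping.
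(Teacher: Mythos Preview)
Your proposal is correct and in fact aligns exactly with the paper: the paper's entire proof of this lemma is the single sentence ``See Theorem 4.4.2 in \cite{DZ} (Bric's inverse Varadhan's Lemma)'', which is precisely the identification you make in your first line. Your additional sketch of the Chebyshev/cover upper bound, the tent-function lower bound, and the Varadhan duality for \eqref{eq12} is sound (including the $\liminf$ step, since $t\mapsto\max(t,-M)$ is continuous and nondecreasing), but goes beyond what the paper does; you could simply cite \cite{DZ} as the paper does and move on.
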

\begin{proof} See Theorem 4.4.2 in  \cite{DZ} (Bric's inverse Varadhan's Lemma).\end{proof}

\begin{remark}
	Using continuous functions of the form
	\[\delta_{(x_0,y_0)}^n (x,y)= \left\{ \begin{array}{cc} -n^2\,[d(x,x_0)+d(y,y_0)]& \text{if}\,\, 0\leq d(x,x_0)+d(y,y_0)< 1/n\\ -n & \text{if}\,\, 1/n \leq d(x,x_0)+d(y,y_0) \end{array}\right.\]
	we can conclude that there is a unique lower semi-continuous function $I$ satisfying equation \eqref{eq12}.
\end{remark}

Now we complete the proof of Proposition \ref{prop:LDP}. We have, by definition of $\pi_\beta$ (see item 2 of Theorem \ref{main thm}  ),
\[\Gamma(f)=\lim_{\beta\to\infty}\frac{1}{\beta}\log (\smallint  e^{\beta A(x,y) + \varphi_{\beta}(x)+\psi_{\beta}(y)+\beta {f(x,y)} }\,d\mu(x)d\nu(y)) .\]
From lemma \ref{lmms} we conclude that $\Gamma(f) = \sup_{x,y} [A(x,y) + \varphi(x) +\psi(y) + f(x,y)]$.   Consequently $(\pi_{\beta})$ satisfies a LDP with (a continuous) rate function $I(x,y) = c(x,y) -\varphi(x)-\psi(y)$.

\subsection{Proof of Proposition \ref{pressure}}

The proof below follows ideas present in \cite{CG}.

\begin{proof}
Let $\pi_\infty$ be an accumulation probability measure of the family $(\pi_{\beta})$ in the weak* topology, as $\beta\to+\infty$. We know from Theorem \ref{main thm} that $\pi_\infty \in M_{max}(A)$.
We have, for $\epsilon>0$,
\begin{align*} P(\beta A) &\geq \beta \smallint  A\, d\pi_{\beta+\epsilon} +H(\pi_{\beta+\epsilon}) = (\beta+\epsilon)\smallint  A\,d\pi_{\beta+\epsilon}+H(\pi_{\beta+\epsilon})-\epsilon\smallint  A\,d\pi_{\beta+\epsilon}\\&=P((\beta+\epsilon)A) - \epsilon\smallint  A\,d\pi_{\beta+\epsilon}.
\end{align*}
Then
\[P((\beta+\epsilon)A) \leq P(\beta A) +\epsilon\smallint  A\,d\pi_{\beta+\epsilon}\]
and therefore
\begin{align*}
	P((\beta+\epsilon)A) &- (\beta+\epsilon)m(A) \leq P(\beta A) +\epsilon\smallint  A\,d\pi_{\beta+\epsilon}- (\beta+\epsilon)m(A)\\
&	= P(\beta A) -\beta m(A) + \epsilon(\smallint  A\,d\pi_{\beta+\epsilon}-m(A))\leq P(\beta A) -\beta m(A).\end{align*}
This shows that $\beta \mapsto [P(\beta A)-\beta m(A)]$ is not increasing. 

We have
\begin{align*} [P(\beta A)-\beta m(A)]=  [H(\pi_{\beta})+\beta(\smallint  A\,d\pi_{\beta}-m(A))]\leq H(\pi_{\beta}).
\end{align*} 
As $[P(\beta A)-\beta m(A)]$ is not increasing and the entropy $H$ is upper semi-continuous, if $\mu_{\beta_i}\to\mu_\infty$ we have
\[\lim_{\beta\to+\infty} [P(\beta A)-\beta m(A)] = \lim_{\beta_i\to+\infty} P(\beta_i A)-\beta_i m(A) \leq \limsup_{\beta_i\to+\infty} H(\pi_{\beta_i}) \leq  H(\pi_\infty).\]

On the other hand, for any $\pi \in \mathcal{M}_{max}(A)$ we have $\beta m(A) = \beta \smallint  A\,d\pi$ and 
\[\lim_{\beta\to+\infty}[P(\beta A)-\beta m(A)] \geq \lim_{\beta\to+\infty}[[\beta \smallint  A\,d\pi+H(\pi)]-\beta \smallint  A\,d\pi] = H(\pi).\]
This shows that $H(\mu_\infty) = H_{\max}$ and that $\lim_{\beta \to+\infty}[P(\beta A )-\beta m(A)]= H_{\max}$.

\end{proof}


\begin{thebibliography}{10}





\bibitem{BLL}
A. Baraviera, R. Leplaideur, and A. Lopes.
\newblock {\em Ergodic optimization, zero temperature limits and the max-plus
  algebra. {Paper} from the 29th {Brazilian} mathematics colloquium --
  29{{\(^{\text o}\)}} {Col{\'o}quio} {Brasileiro} de {Matem{\'a}tica}, {Rio}
  de {Janeiro}, {Brazil}, {July} 22 -- {August} 2, 2013}.
\newblock Publ. Mat. IMPA. Rio de Janeiro: Instituto Nacional de Matem{\'a}tica
  Pura e Aplicada (IMPA), 2013.

\bibitem{BCMV} A. Bis, M. Carvalho, M. Mendes, and P. Varandas.
\newblock A convex analysis approach to entropy functions, variational principles
and equilibrium states. 
\newblock {\em Commun. Math. Phys.} 394(1):215--256, 2022.


\bibitem{BCMVZ}
A. Bis, M. Carvalho, M. Mendes, P. Varandas, and
X. Zhong.
\newblock Correction to: A convex analysis approach to entropy
functions, variational principles and equilibrium states
\newblock {\em Commun. Math. Phys.} 401(3):3335--3342, 2023.

\bibitem{bousch}
T. Bousch.
\newblock La condition de Walters.
\newblock {\em Ann. Sci. {\'E}c. Norm. Sup{\'e}r. (4)}, 34(2):287--311, 2001.

\bibitem {CG}  
   J. P. Conze and Y. Guivarc'h.
   \newblock  Croissance des sommes ergodiques et
principe variationnel.
\newblock manuscript circa, 1993.



\bibitem{DZ}
A. Dembo and O. Zeitouni.
\newblock {\em Large deviations techniques and applications.}, volume~38 of
  {\em Stoch. Model. Appl. Probab.}
\newblock Berlin: Springer, 2nd ed., corrected 2nd printing edition, 2010.



\bibitem{LM}
A. Lopes, J. Mengue.
\newblock Duality Theorems in Ergodic Transport.
\newblock {\em Journal of Statistical Physics}. 149, issue 5,  921--942, 2012.  



\bibitem{LM3}
A. Lopes and J. Mengue.
\newblock On information gain, {Kullback}-{Leibler} divergence, entropy
  production and the involution kernel.
\newblock {\em Discrete Contin. Dyn. Syst.}, 42(7):3593--3627, 2022.


\bibitem{LMMS2}
A. Lopes, J. Mengue, J. Mohr and R. Souza.
\newblock Entropy, Pressure and Duality for Gibbs plans in Ergodic transport.
\newblock {\em Bull Braz Math Soc} 46, 353–389, 2015.

\bibitem{MO}
J. Mengue and E. Oliveira.
\newblock Duality results for iterated function systems with a general family
  of branches.
\newblock {\em Stoch. Dyn.}, 17(3):23, 2017.
\newblock Id/No 1750021.

\bibitem{MO3}
J. Mengue and E. Oliveira.
\newblock Large Deviation for Gibbs Probabilities at Zero Temperature and Invariant Idempotent Probabilities for Iterated Function Systems.
\newblock {\em  Journal of Statistical Physics}, 192, 18, 2025.

\bibitem{R}
R. Sinkhorn.
\newblock A Relationship Between Arbitrary Positive Matrices and Doubly Stochastic Matrices.
\newblock{\em The Annals of Mathematical Statistics}, vol. 35, no. 2, 876–879, 1964.

\bibitem{SK}
R. Sinkhorn and P. Knopp.
\newblock Concerning nonnegative matrices and doubly stochastic matrices. 
\newblock {\em Pacific Journal of Mathematics} Vol. 21, No. 2, 343–348, 1967.

\bibitem{Vi2}
C. Villani.
\newblock Optimal transport : old and new
\newblock  Springer Berlin, Heidelberg, 2009.

\bibitem{Vi1}
C. Villani.
\newblock Topics in optimal transportation.
\newblock AMS, Providence, 2003.





\end{thebibliography}
\end{document}